\newlength{\mylength}
\renewcommand{\maketag@@@}[1]{\hbox{\m@th\normalsize\normalfont#1}}%
\DeclareMathOperator{\trace}{Tr}
\newcommand{\Om}{\Omega}
\newcommand{\eps}{\varepsilon}
\newcommand{\FFF}{{\mathcal F}}
\newcommand{\LLL}{{\mathcal L}}
\newcommand{\MMM}{{\mathcal M}}
\newcommand{\PPP}{{\mathcal P}}
\newcommand{\RRR}{{\mathcal R}}
\newcommand{\UUU}{{\mathcal U}}
\newcommand{\YYY}{{\mathcal Y}}
\newcommand{\ZZZ}{{\mathcal Z}}
\newcommand{\NN}{\mathbb{N}}
\newcommand{\RR}{\mathbb{R}}
\newcommand{\DD}{\mathbb{D}}
\newcommand{\FF}{\mathbb{F}}
\newcommand{\LL}{\mathbb{L}}
\newcommand{\YY}{\mathbb{Y}}
\newcommand{\CCCC}{{\mathscr C}}
\newcommand{\HHHH}{{\mathscr H}}
\newcommand{\MMMM}{{\mathscr M}}
\newcommand{\OOOO}{{\mathscr O}}
\newcommand{\SSSS}{{\mathscr S}}
\numberwithin{equation}{section}
\newtheorem{defn}{Definition}[section]
\newaliascnt{lem}{defn}
\newtheorem{lem}[lem]{Lemma}
\newaliascnt{prop}{defn}
\newaliascnt{theorem}{defn}
\newtheorem{theorem}[theorem]{Theorem}
\newaliascnt{cor}{defn}
\theoremstyle{definition}
\newaliascnt{expl}{defn}
\newaliascnt{expls}{defn}
\newtheorem{expls}[expls]{Examples}
\newaliascnt{rem}{defn}
\newtheorem{rem}[rem]{Remark}
\newaliascnt{rems}{defn}
\begin{document}

\title{Fully coupled forward-backward stochastic dynamics and functional differential systems\footnotetext{This work is partially supported by ETH Z\"urich and the Oxford-Man Institute (University of Oxford). The authors would like to thank their respective advisors Freddy Delbaen (ETH), Terry Lyons (Oxford) and Zhongmin Qian (Oxford) for their helpful discussions and comments.}}

\date{}

%
\author[$\ast$]{Matteo Casserini}
\affil[$\ast$]{\small{Department of Mathematics, ETH Z\"urich,
Switzerland, \texttt{matteo.casserini@math.ethz.ch}}\vspace{0.4cm}}
\author[$\dagger$]{Gechun Liang \vspace{0.4cm} \!\!}
\affil[$\dagger$]{\small{Department of Mathematics, King's College
London, U.K., \texttt{gechun.liang@kcl.ac.uk}}\vspace{0.2cm}}
\affil[$\dagger$]{\small{Oxford-Man Institute, University of Oxford,
U.K.}\vspace{0.2cm}}

\maketitle

\begin{abstract}

This article introduces and solves a general class of fully coupled
forward-backward stochastic dynamics by investigating the associated
system of functional differential equations. As a consequence, we
are able to solve many different types of forward-backward
stochastic differential equations (FBSDEs) that do not fit in the
classical setting. In our approach, the equations are running in the
same time direction rather than in a forward and backward way, and
the conflicting nature of the structure of FBSDEs is therefore
avoided.

\vspace{0.6cm} \noindent \textit{Keywords.} Backward stochastic
differential equation, BSDE, fully coupled forward-backward
stochastic differential equation, FBSDE, functional differential
equation, functional differential system.

\vspace{0.6cm} \noindent \textit{Mathematics Subject Classification
(2010).} 60H10, 60H30, 93E03.
\end{abstract}


\newpage

\section{Introduction}
Due to their central role at the intersection between stochastic
analysis, mathematical finance and partial differential equations,
backward stochastic differential equations (BSDEs) and
forward-backward stochastic differential equations (FBSDEs) have
been subject of extensive research during the last two decades.
While linear BSDEs had already been introduced by Bismut \cite{Bis}
in 1973, it was only after the seminal work of Pardoux and Peng
\cite{ParPen} in 1990, who first studied the general non-linear
case, that BSDEs gained considerable attention. Since then, the
importance of the theory of BSDEs increased dramatically, finding
numerous applications in stochastic control theory, PDE theory,
mathematical finance and many other fields. We refer the reader to
the books \cite{ElKMaz,MaYon} and the surveys
\cite{ElKPenQue,ElKHamMat} for an extensive overview of BSDEs and
their applications.

While simple types of decoupled FBSDEs had already been considered
by Pardoux and Peng \cite{ParPen2}, the study of fully coupled
FBSDEs has been initiated by Antonelli \cite{Ant}, who proved the
existence and uniqueness of local solutions. The solvability of
fully coupled FBSDEs was later studied by several authors, and
mainly three types of methods have been proposed so far, each having
its constraints and which do not cover each other. The first is the
method of contraction mapping, introduced in the local case by
Antonelli \cite{Ant}: his work was later developed by Pardoux and
Tang \cite{ParTan} to solve FBSDEs globally under additional
monotonicity conditions. Later on, motivated by the method of
continuation in PDE theory, Hu and Peng \cite{HuPen}, Peng and Wu
\cite{PenWu} and Yong \cite{Yon} solved FBSDEs on arbitrary
intervals by relying on a different type of monotonicity assumptions
on the coefficients. Finally, Ma et al. \cite{MaProYon} introduced
the well-known four-step scheme, which links FBSDEs and quasilinear
PDEs: in this case, the coefficients have to be deterministic and
satisfy strong regularity assumptions. This method was further
developed by Delarue \cite{Del}, who relaxed the regularity
assumption on the coefficients by combining the four-step scheme
with the contraction method. In the last years, Zhang
\cite{Zha1,Zha2} and more recently Ma et al. \cite{MaWuZhaZha}
developed a so-called decoupling scheme to solve FBSDEs with random
coefficients, which unifies most of the existing results at least in
the one-dimensional case. For an extensive account of FBSDEs, we
refer to the book by Ma and Yong \cite{MaYon}.

The purpose of this article is to introduce a general class of fully
coupled forward-backward stochastic dynamics on a general filtered
probability space, which contains classical FBSDEs as a special
case. Inspired by the recent work on Lipschitz BSDEs of Liang et al.
\cite{LiaLyoQia}, we study the solvability of these forward-backward
dynamics by introducing an appropriate \emph{functional differential
system} of the form \small
\[
\begin{cases}
d X_t= \mu(t,X_t, \YYY(X,V)_t, \LLL^1(\MMM(X,V))_t) d t + \sigma(t,X_t,\YYY(X,V)_t, \LLL^2(\MMM(X,V))_t) d W_t, \\
d V_t=f(t,X_t, \YYY(X,V)_t, \LLL^3(\MMM(X,V))_t) d t, \\
X_0=x, \quad V_0=0,
\end{cases}
\]
\normalsize where $\mu$, $\sigma$, $f$ are random functions,
$\LLL^1$, $\LLL^2$, $\LLL^3$ are general abstract operators, and
$\MMM$, $\YYY$ are given, for a random function $\phi$, by
\begin{align*}
\MMM(X,V)_t&=\MMM^\phi(X,V)_t:=E[\phi(X_T) + V_T|\FFF_t], \\
\YYY(X,V)_t&=\YYY^\phi(X,V)_t:=\MMM^\phi(X,V)_t - V_t.
\end{align*}

In particular, our approach does not rely a priori on the existence
of martingale representations, and shows that FBSDEs can be
reformulated as functional differential equations defined in a
forward way: since both equations are running in the same time
direction, this avoids the conflicting nature of the structure of
FBSDEs. More important, our results allow us to consider a more
general class of forward-backward stochastic dynamics which are
beyond the existing framework, and extend the results of Liang et
al. \cite{LiaLyoQia} from backward systems to fully coupled
forward-backward systems.

After introducing an appropriate framework and defining properly the
problem, we study its local solvability and derive our main result:
the existence of a unique local solution to the functional
differential system under Lipschitz and monotonicity conditions on
the coefficients and under specific Lipschitz assumptions on the
operators $\LLL^i$. In particular, the conditions on $\LLL^i$ are
rather mild and allow to consider many types of operators different
from the usual martingale integrand processes treated in classical
FBSDEs: as a consequence, we can solve within our framework many
different types of forward-backward equations that do not fit in the
classical FBSDE setting. To emphasize the generality of these
assumptions, we present several examples of possible operators and
potential financial applications.

In the second part of the article, we discuss the solvability of the
system on arbitrarily large time intervals. This is however more
problematic: indeed, it appears impossible to study such an
extension without defining the operators $\LLL^i$ explicitly, and
one has to consider the problem separately for each choice of
$\LLL^i$. We conclude the article by presenting a study of the case
where the filtration is Brownian and the operators $\LLL^i$ are
given by It\^o's representation.

The article is organized as follows. First of all, to provide some
intuition, we give in Section \ref{section_brownian_example} a brief
overview of the functional differential approach in a simple
Brownian setting. In Section \ref{section_problem_setup}, after
introducing a more general framework, we give a rigorous definition
of our class of forward-backward dynamics and the associated
functional differential system. Section \ref{section_local_sol} is
then dedicated to the existence and uniqueness of solutions to the
latter system for sufficiently small time horizons. Finally, in
Section \ref{section_global_sol} we discuss the general problem of
extending the solution to arbitrarily large time intervals, and
study in particular the case of classical Brownian FBSDEs.

\section{The functional differential approach}\label{section_brownian_example}

We would like to begin by providing the reader with some intuition
of the approach we are going to use in the sequel, which is inspired
by the work of Liang et al. \cite{LiaLyoQia}. We first present the
following elementary, but very illustrative result derived in
\cite{LiaLyoQia}:
\begin{rem}[Liang et al. \cite{LiaLyoQia}] \label{rem_intro}
For $T > 0$, assume that we have a special semimartingale $(Y_t)_{t
\in [0,T]}$ on a probability space $(\Om, \FFF, (\FFF_t)_{t \in
[0,T]}, P)$ satisfying the usual assumptions, and let the terminal
value $Y_T= \xi \in L^1(\FFF_T)$ be given. Furthermore, assume that
the canonical decomposition of $Y$ is given by
\[
Y_t=M_t - V_t,
\]
where $M$ is a martingale and $V$ a predictable process of finite
variation with $V_0=0$. Then, if $V_T$ is integrable, it is easy to
verify that, for all $t \in [0,T]$,
\begin{align*}
M_t&=E[M_T|\FFF_t]=E[\xi + V_T|\FFF_t], \\
Y_t&=M_t - V_t=E[\xi + V_T|\FFF_t] - V_t.
\end{align*}
In other words, the semimartingale $Y$ and the martingale $M$ can be
expressed as operators of the terminal value $\xi$ and the finite
variation process $V$.
\end{rem}

We show now, with the help of some intuitive arguments, how this
remark can lead us to an alternative formulation of the classical
FBSDE problem. Let $(\Om, \FFF, P)$ be for the moment a complete
probability space with an $m$-dimensional Brownian motion
$W=(W_t)_{t \in [0,T]}$ and the corresponding filtration
$(\FFF_t)_{t \in [0,T]}$, augmented by the $P$-null sets in $\FFF$.
We consider a classical fully coupled FBSDE of the form
\begin{equation}\label{classfbsde}
\begin{cases}
d X_t = \mu(t,X_t, Y_t, Z_t) d t + \sigma(t,X_t,Y_t,Z_t) d W_t, \\
d Y_t=-f(t,X_t, Y_t, Z_t) d t + Z_t d W_t, \\
X_0=x,\quad Y_T = \phi(X_T),
\end{cases}
\end{equation}
where the functions $\mu: \Omega \times [0,T] \times \RR^n\times
\RR^d \times \RR^{d \times m}\rightarrow \RR^n$, $\sigma: \Omega
\times[0,T]\times \RR^n \times \RR^d \times \RR^{d \times m}
\rightarrow \RR^{n\times m}$, $f: \Omega \times [0,T]\times
\RR^n\times \RR^d \times \RR^{d \times m}\rightarrow \RR^d$, $\phi:
\Omega \times \RR^n\rightarrow \RR^d$ satisfy the usual
measurability and integrability conditions.

Assume now that the above FBSDE has a solution $(X,Y,Z)$. Since
$\Phi(X_T)$ is the terminal value of the semimartingale $Y$,
\autoref{rem_intro} induces us to introduce the operators
$\MMM^\phi$ and $\YYY^\phi$ by defining
\begin{align*}
\MMM^\phi(X,V)_t&:=E[\phi(X_T) + V_T|\FFF_t], \\
\YYY^\phi(X,V)_t&:=\MMM^\phi(X,V)_t - V_t, \quad t \in [0,T],
\end{align*}
for any processes $X$, $V$ such that $\phi(X_T) \in L^1(\FFF_T)$,
$V_T \in L^1(\FFF_T)$. Then, by \autoref{rem_intro} and the
definition of the operators $\YYY^\phi$ and $\MMM^\phi$, it seems
plausible to associate the above FBSDE to the following system of
\emph{forward functional differential equations}
\[
\begin{cases}
d X_t=\mu\big(t,X_t, \YYY^\phi(X,V)_t, \ZZZ^\phi(X,V)_t\big) d t + \sigma(t,X_t,\YYY^\phi(X,V)_t, \ZZZ^\phi(X,V)_t) d W_t,\\
d V_t=f\big(t,X_t, \YYY^\phi(X,V)_t, \ZZZ^\phi(X,V)_t\big) d t, \\
X_0=x, \quad V_0=0,
\end{cases}
\]
where $\ZZZ^\phi$ is given implicitly via It\^o's representation
theorem by
\[
\MMM^\phi(X,V)_T=E[\MMM^\phi(X,V)_T]+\int_0^T \ZZZ^\phi(X,V)_s d
W_s.
\]

The peculiarity of these stochastic differential equations consists
in the fact that the coefficients $\mu$, $\sigma$, $f$ depend not
only on the behaviour of the solution process $(X,V)$ up to the
present value, but also on the terminal value $(X_T, V_T)$ of the
solution: such stochastic differential equations are not standard,
and for this reason they are called \emph{functional} differential
equations (note that the term ``functional differential equations''
is often used in the literature to refer to stochastic delay
differential equations: however, contrary to the latter, the drivers
of the equations studied here do not have any delay in the past, but
rather in the future).

\section{Fully coupled functional differential systems}\label{section_problem_setup}

In this section, we will show how the approach presented above can
be made rigorous and extended to a much more general framework. To
this end, we first need to introduce some notation: in the
following, we fix $T > 0$, and assume that we are given a complete
probability space $(\Om, \FFF, P)$ together with a general
filtration $\FF=(\FFF_t)_{t \in [0,T]}$ satisfying the usual
assumptions. We recall that every $\FF$-martingale has under these
conditions a c\`adl\`ag version, which we will always choose.
Moreover, we denote by $\PPP$ the predictable $\sigma$-field with
respect to $\FF$ and assume that an $m$-dimensional Brownian motion
$W=(W_t)_{t \in [0,T]}$ is defined on $(\Om, \FFF, \FF, P)$.

For $k$, $l \in \NN$, $|\cdot|$ denotes the Euclidean norm on
$\RR^k$, respectively the Hilbert-Schmidt norm on $\RR^{k \times
l}$, and $\RR^{k \times l}$ will often be identified with $\RR^{k
\cdot l}$. We define $\SSSS^2([0,T],\RR^d)$ as the space of all
processes $V:\Om \times [0,T] \to \nolinebreak\RR^d$ continuous and
adapted such that $V_0=0$ and $E[ \sup_{t \in [0,T]} |V_t|^2] <
\infty$, while $\MMMM^2([0,T],\RR^d)$ denotes the space of all
square integrable $\RR^d$-valued martingales on $[0,T]$. Both
$\SSSS^2([0,T],\RR^d)$ and $\MMMM^2([0,T],\RR^d)$ are endowed with
the norm
\[
\|V\|_{\SSSS^2[0,T]}:=\sqrt{E\Big[\sup_{t \in [0,T]} |V_t|^2\Big]},
\]
and note that
$\big(\SSSS^2([0,T],\RR^d),\|\cdot\|_{\SSSS^2[0,T]}\big)$ is then a
Banach space. Sometimes, we will also need the direct sum space
$\SSSS^2([0,T],\RR^d)\oplus \MMMM^2([0,T],\RR^d)$, endowed with the
same norm $\|\cdot\|_{\SSSS^2[0,T]}$.

In the following, we will consider a particular generalization of
the FBSDE \eqref{classfbsde} on the filtered probability space
$(\Om, \FFF, \FF, P)$. More exactly, we will assume that $\mu$,
$\sigma$ and $f$ depend on some general processes $\LLL^i(M)$,
$i=1,2,3$, instead than on $Z$, where $\LLL^i$ is for each $i$ an
abstract operator defined on $\MMMM^2([0,T],\RR^d)$ and taking
values, for some $p_i \in \NN$, in the space of $p_i$-dimensional
adapted processes (the codomains of $\LLL^i$ will be further
specified later). As we will see later more in detail, this
substitution will allow both to take into account the generality of
the filtration $\FF$ and to treat locally other types of
forward-backward equations not fitting in the classical framework.
This generalization of the FBSDE \eqref{classfbsde} leads us to the
following system:
\begin{equation}\label{fbsdyn}
\begin{cases}
d X_t= \mu(t,X_t, Y_t, \LLL^1(M)_t) d t + \sigma(t,X_t,Y_t, \LLL^2(M)_t) d W_t, \\
d Y_t=-f(t,X_t, Y_t, \LLL^3(M)_t) d t + d M_t,  \\
X_0=x, \quad Y_T=\phi(X_T),
\end{cases}
\end{equation}
where $\mu: \Omega \times [0,T] \times \RR^n\times \RR^d \times
\RR^{p_1}\rightarrow \RR^n$, $\sigma: \Omega \times[0,T]\times \RR^n
\times \RR^d\times \RR^{p_2}\rightarrow \RR^{n\times m}$, $f: \Omega
\times [0,T]\times \RR^n\times \RR^d \times \RR^{p_3}\rightarrow
\RR^d$, $\phi: \Omega \times \RR^n\rightarrow \RR^d$ have the
necessary measurability and integrability properties. Note that the
system is then completely determined by $(\mu,\sigma, f, \phi,
\LLL^1, \LLL^2,\LLL^3)$.

\begin{defn}
A solution to \eqref{fbsdyn} is a triplet of processes $(X,Y,M)$
such that $X \in \SSSS^2([0,T],\RR^n)$, $Y \in \SSSS^2([0,T],\RR^d)
\oplus \MMMM^2([0,T],\RR^d)$, $M \in \MMMM^2([0,T],\RR^d)$, and
satisfying the integral formulation of \eqref{fbsdyn}.
\end{defn}

We will call such systems \emph{fully coupled forward-backward
stochastic dynamics}. As mentioned in the previous section, a viable
approach to study the solvability of this system is to reformulate
the problem with the help of appropriate functional differential
equations. To this end, we denote by $\CCCC^\phi_X$ the class of
$\RR^n$-valued adapted processes $X$ on $[0,T]$ such that $\phi(X_T)
\in L^1(\FFF_T)$, by $\CCCC_V$ the class of $\RR^d$-valued adapted
processes $V$ on $[0,T]$ such that $V_T \in L^1(\FFF_T)$, and we
define the operators $\MMM^\phi$ and $\YYY^\phi$ on $\CCCC^\phi_X
\times \CCCC_V$ by
\begin{equation}\label{coupled_y&m}
\begin{split}
\MMM^\phi(X,V)_t&:=E[\phi(X_T) + V_T|\FFF_t], \\
\YYY^\phi(X,V)_t&:=\MMM^\phi(X,V)_t - V_t, \quad t \in [0,T].
\end{split}
\end{equation}

For the rest of this article, we will drop the dependence of
$\MMM^\phi$ and $\YYY^\phi$ on $\phi$ by writing $\MMM$ and $\YYY$.
With the help of the operators $\MMM$ and $\YYY$, we reformulate the
problem \eqref{fbsdyn} as the following fully coupled system of
\emph{forward functional differential equations}: \small
\begin{equation}\label{fbfunctdiffeq}
\hspace{-1em}\begin{cases}
d X_t= \mu(t,X_t, \YYY(X,V)_t, \LLL^1(\MMM(X,V))_t) d t + \sigma(t,X_t,\YYY(X,V)_t, \LLL^2(\MMM(X,V))_t) d W_t, \hspace{-3.6em} \\
d V_t=f(t,X_t, \YYY(X,V)_t, \LLL^3(\MMM(X,V))_t) d t, \\
X_0=x, \quad V_0=0.
\end{cases}
\end{equation}
\normalsize
\begin{defn}
A solution to \eqref{fbfunctdiffeq} is a pair of processes $(X,V)$
such that $(X,V)\in \SSSS^2([0,T],\RR^n) \times
\SSSS^2([0,T],\RR^d)$ and satisfying the integral formulation of
\eqref{fbfunctdiffeq}.
\end{defn}

Such systems will be called \emph{fully coupled functional
differential systems}. It is not difficult to show the equivalence
of the systems \eqref{fbsdyn} and \eqref{fbfunctdiffeq}:

\begin{lem}\label{eq_solvability}
The fully coupled forward-backward system \eqref{fbsdyn} has a
solution if and only if the functional differential system
\eqref{fbfunctdiffeq} does.
\end{lem}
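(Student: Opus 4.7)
My plan is to show that solutions correspond bijectively via explicit formulas, and to reduce the whole content of the lemma to the elementary identity recorded in \autoref{rem_intro}. Concretely, I will build a map $(X, Y, M) \mapsto (X, V)$ by setting $V_t := \int_0^t f(s, X_s, Y_s, \LLL^3(M)_s)\,ds$, and an inverse map $(X, V) \mapsto (X, \YYY(X,V), \MMM(X,V))$; showing that these land in the correct spaces and that the equations are preserved term by term gives the equivalence.

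For the first direction, starting from a solution $(X, Y, M)$ of \eqref{fbsdyn}, I would first check that the candidate $V$ lies in $\SSSS^2([0,T],\RR^d)$ with $V_0 = 0$, which follows from the standing integrability of $f$ and from $(X, Y, M) \in \SSSS^2 \times (\SSSS^2 \oplus \MMMM^2) \times \MMMM^2$. Next I would integrate the backward equation and use $Y_T = \phi(X_T)$ to obtain
\[
M_t - M_0 = Y_t - Y_0 + V_t, \qquad M_T - M_0 = \phi(X_T) + V_T - Y_0.
\]
The integral form of \eqref{fbsdyn} only pins down $M$ up to an additive constant, so I would fix the canonical representative $M_0 = Y_0$, equivalently the martingale obtained by writing the special semimartingale $Y$ as $M - V$ with $V_0 = 0$ exactly as in \autoref{rem_intro}. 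Then $M_T = \phi(X_T) + V_T$, and taking conditional expectations gives $M_t = \MMM(X,V)_t$ and hence $Y_t = M_t - V_t = \YYY(X,V)_t$; substituting these identifications into the forward SDE and into the definition of $V$ reproduces \eqref{fbfunctdiffeq}.

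For the converse direction, I would start from $(X, V) \in \SSSS^2([0,T],\RR^n) \times \SSSS^2([0,T],\RR^d)$ solving \eqref{fbfunctdiffeq}, set $M := \MMM(X,V)$ and $Y := \YYY(X, V) = M - V$, and verify that integrability of $\phi(X_T) + V_T$ (which follows from $X \in \SSSS^2$ together with the growth assumed on $\phi$) makes $M$ a square-integrable martingale, whence $Y \in \SSSS^2 \oplus \MMMM^2$. Differentiating yields $dY_t = dM_t - dV_t = -f(t, X_t, Y_t, \LLL^3(M)_t)\,dt + dM_t$ with $Y_T = M_T - V_T = \phi(X_T)$, and plugging $Y$ and $M$ into the forward SDE recovers \eqref{fbsdyn}. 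The only non-mechanical step is the identification $M = \MMM(X,V)$ in the forward direction — which is precisely the content of the preliminary remark — along with the minor bookkeeping of picking the correct additive constant for $M$ so that the two martingales seen by $\LLL^i$ agree pointwise; everything else (membership in $\SSSS^2$, $\MMMM^2$, $\SSSS^2 \oplus \MMMM^2$, and matching of the integrands of $dt$ and $dW$) is routine verification.
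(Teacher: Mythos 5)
Your proof is correct and follows essentially the same route as the paper's (two-sentence) argument: the forward direction is the canonical decomposition $Y = M - V$ with $V_0=0$, and the converse is the substitution $(X,\YYY(X,V),\MMM(X,V))$. You merely spell out the details the paper leaves implicit, in particular the normalization $M_0 = Y_0$ needed so that $\MMM(X,V)$ coincides with the martingale actually fed to the operators $\LLL^i$, which is a worthwhile point to make explicit.
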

\begin{proof}
If $(X,Y,M)$ solves \eqref{fbsdyn}, then we obtain a solution of
\eqref{fbfunctdiffeq} via the canonical decomposition of the
semimartingale $Y$. Conversely, if $(X,V)$ is a solution of the
functional differential system, then $\big(X, \YYY(X,V),
\MMM(X,V)\big)$ solves \eqref{fbsdyn}.
\end{proof}

An immediate observation is that the functional differential system
\eqref{fbfunctdiffeq} has a more homogeneous structure than that of
the original problem. Indeed, while the forward-backward dynamics
\eqref{fbsdyn} consist of a forward and a backward equation of
different nature, both the functional differential equations in
\eqref{fbfunctdiffeq} are running forward in time and show a similar
dependence of the coefficients on both the present and the terminal
values of the solution processes. In particular, this homogeneity
allows to rewrite the problem more compactly as
\[
\begin{cases}
d \UUU_t= \Psi(t,\pi_1(\UUU_t), \YYY(\UUU)_t, \LLL^1(\MMM(\UUU))_t, \LLL^3(\MMM(\UUU))_t) d t \\
\hfill+ \Sigma(t,\pi_1(\UUU_t), \YYY(\UUU)_t,\LLL^2(\MMM(\UUU))_t) d W_t, \\
\UUU_0=(x,0)^{\mathrm T},
\end{cases}
\]
where $\UUU=(X,V)^{\mathrm T}$, $\pi_1: \RR^{n+d} \to \RR^n$ is the
projection on the first $n$ components, $\Sigma=(\sigma, 0)^{\mathrm
T}$, and for $t \in [0,T]$, $(x,y,z_1,z_3) \in \RR^n\times \RR^d
\times \RR^{p_1} \times \RR^{p_3}$,
$\Psi(t,x,y,z_1,z_3)=(\mu(t,x,y,z_1), f(t,x,y,z_3))^{\mathrm T}$.
For the rest of the article, we prefer however to consider the
system as formulated in \eqref{fbfunctdiffeq}, since it will be more
convenient to treat the coupling between $X$ and $V$ in such a
framework.

\section{Existence and uniqueness of local solutions}\label{section_local_sol}

Because of \autoref{eq_solvability}, we will now focus our attention
on the existence and uniqueness of solutions to the fully coupled
functional differential system \eqref{fbfunctdiffeq}. As a first
step, we study the local solvability of the problem and introduce
sufficient monotonicity and Lipschitz assumptions on the
coefficients $\mu$, $\sigma$, $f$, $\phi$ and the operators
$\LLL^1$, $\LLL^2$ and $\LLL^3$.

In the following, we denote by $\HHHH^2([0,T],\RR^l)$ the space of
$\PPP$-measurable processes $H:\Om \times [0,T] \to \RR^l$ such that
$\|H\|^2_{\HHHH^2[0,T]}:=E[\int_0^T |H_t|^2 d t] < \infty$, where
$l\in \NN$. First of all, we shall assume that the coefficients
$\mu$, $\sigma$, $f$, $\phi$ satisfy the following assumption:

\bigskip
\noindent{\bfseries Assumption (A1):}  \emph{The functions $\mu:
\Omega \times[0,T]\times\RR^n\times \RR^d \times
\RR^{p_1}\rightarrow \RR^n$, $\sigma: \Omega \times[0,T]\times \RR^n
\times \RR^d\times \RR^{p_2}\rightarrow \RR^{n\times m}$, $f: \Omega
\times[0,T]\times \RR^n\times \RR^d \times \RR^{p_3} \rightarrow
\RR^d$ and $\phi: \Omega \times\RR^n\rightarrow \RR^d$ satisfy
Assumption \textup{{\bfseries (A1)}} if there exists a constant
$C>0$ such that:
\begin{itemize}
\item[\textup{(A1.1)}] \parbox[t][][l]{\mylength}{For any $(x,y,z_1,z_2,z_3) \in \RR^n \times \RR^d \times \RR^{p_1} \times \RR^{p_2}\times \RR^{p_3}$, the processes $\mu(\cdot, x,y,z_1)$, $\sigma(\cdot, x,y,z_2)$ and $f(\cdot, x,y,z_3)$ are $\PPP$-measurable and $\phi(x)$ is $\FFF_T$-measurable.
}
\item[\textup{(A1.2)}] \parbox[t][][l]{\mylength}{For every $(x,y,z_1),(x',y',z'_1) \in \RR^n \times \RR^d \times \RR^{p_1}$,
\begin{align*}
(x-x')^{\mathrm T}\big(\mu(\cdot,x,y,z_1) - \mu(\cdot&,x',y,z_1)\big)\le C|x-x'|^2, \\
|\mu(\cdot,x,y,z_1) - \mu(\cdot,x,y',z'_1)| &\le C(|y-y'|+|z_1-z'_1|), \\
|\mu(\cdot,x,0,0)| &\le C(1+|x|) \quad d P \otimes d t \text{-a.s.},
\end{align*}
and the function $x \mapsto \mu(\cdot,x,y,z_1)$ is $d P \otimes d
t$-a.s. continuous. }
\item[\textup{(A1.3)}] \parbox[t][][l]{\mylength}{ $f(\cdot,0,0,0) \in \HHHH^2([0,T],\RR^d)$, $\sigma(\cdot,0,0,0) \in \HHHH^2([0,T],\RR^{n \times m})$ and $\phi(0) \in L^2(\Omega,\RR^d)$.
}
\item[\textup{(A1.4)}] \parbox[t][][l]{\mylength}{For every $(x,y,z_2,z_3),(x',y',z'_2,z'_3) \in \RR^n \times \RR^d \times \RR^{p_2}\times \RR^{p_3}$,
\begin{align*}
|\sigma(\cdot,x,y,z_2)-\sigma(\cdot,x',y',z'_2)|^2
&\le C(|x-x'|^2+|y-y'|^2+|z_2-z'_2|^2), \\
|f(\cdot,x,y,z_3) - f(\cdot,x',y',z'_3)| &\le C(|x-x'|+|y-y'|+|z_3-z'_3|), \\
|\phi(x) - \phi(x')| &\le C |x-x'| \quad d P \otimes d t
\text{-a.s..}
\end{align*}
}
\end{itemize}
}

\begin{rem}
The above conditions on $\mu$, $\sigma$, $f$ and $\phi$ are quite
standard in the theory of FBSDEs (see for instance \cite{MaYon}).
Moreover, the reader can easily verify that the condition (A1.2)
could be replaced by the following stronger, but more standard
assumption:

\medskip
\noindent \textup{(A1.2')}
\parbox[t][][l]{\mylength}{\emph{$\mu(\cdot,0,0,0)  \!\in \!
\HHHH^2([0,T],\RR^n)$ and, for $(x,y,z_1), (x'\!,y'\!,z'_1) \! \in
\! \RR^n \times \RR^d \times \RR^{p_1}\!$,
\[
|\mu(t,x,y,z_1) - \mu(t,x'\!,y'\!,z'_1)| \! \le \!
C(|x-x'|+|y-y'|+|z_1-z'_1|) \;\; d P \otimes d t \text{-a.s.}.
\]
}}
\end{rem}

In particular, the assumptions on $\phi$ allow us to derive the
following Lipschitz estimates on the operators $\YYY$ and $\MMM$,
which will be essential in the sequel:
\begin{lem}\label{fbest_y&m}
Assume that $\phi$ satisfies the conditions in {\bfseries
\textup{(A1)}}. Then we have that, for the operators introduced in
\eqref{coupled_y&m},
\begin{align*}
\YYY&: \SSSS^2([0,T],\RR^n) \times \SSSS^2([0,T],\RR^d) \to \SSSS^2([0,T],\RR^d)\oplus \MMMM^2([0,T],\RR^d), \\
\MMM&: \SSSS^2([0,T],\RR^n) \times \SSSS^2([0,T],\RR^d) \to
\MMMM^2([0,T],\RR^d).
\end{align*}
Moreover, for any $X,X' \in \SSSS^2([0,T],\RR^n)$ and $V,V' \in
\SSSS^2([0,T],\RR^d)$,
\begin{align*}
\|\YYY(X,V) -\YYY(X',V')\|_{\SSSS^2[0,T]} &\le 2 C \|X-X'\|_{\SSSS^2[0,T]}+ 3 \|V-V'\|_{\SSSS^2[0,T]}, \\
\|\MMM(X,V)- \MMM(X',V')\|_{\SSSS^2[0,T]} &\le 2 C
\|X-X'\|_{\SSSS^2[0,T]}+ 2 \|V-V'\|_{\SSSS^2[0,T]}.
\end{align*}
\end{lem}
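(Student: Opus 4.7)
The plan is to reduce both Lipschitz estimates to an $L^2$ bound on the terminal random variable $\phi(X_T) - \phi(X'_T) + V_T - V'_T$, and then to invoke Doob's $L^2$ maximal inequality to convert such a bound on a martingale's terminal value into an $\SSSS^2$ bound on the whole martingale. The Lipschitz continuity of $\phi$ from \textup{(A1.4)} handles the $\phi$-part, while the triangle inequality takes care of the $V$-part.

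First, I would verify well-definedness of the codomains. For any $X \in \SSSS^2([0,T],\RR^n)$ one has $X_T \in L^2(\FFF_T)$, and the Lipschitz bound $|\phi(x)-\phi(x')|\le C|x-x'|$ together with $\phi(0) \in L^2$ from \textup{(A1.3)} gives $\phi(X_T) \in L^2(\FFF_T)$. Since also $V_T \in L^2$, the process $\MMM(X,V)_t = E[\phi(X_T) + V_T \mid \FFF_t]$ (in its c\`adl\`ag version) is a square-integrable martingale, hence lies in $\MMMM^2([0,T],\RR^d)$. The process $-V$ clearly belongs to $\SSSS^2([0,T],\RR^d)$, so the decomposition $\YYY(X,V) = \MMM(X,V) - V$ shows $\YYY(X,V) \in \SSSS^2 \oplus \MMMM^2$, and its supremum norm is finite by the same argument.

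For the quantitative estimates, the key point is that $\MMM(X,V) - \MMM(X',V')$ is itself a square-integrable martingale with terminal value $\phi(X_T) - \phi(X'_T) + V_T - V'_T$. Doob's $L^2$ maximal inequality then yields
\[
\|\MMM(X,V)-\MMM(X',V')\|_{\SSSS^2[0,T]} \le 2\,\big\|\phi(X_T)-\phi(X'_T) + V_T - V'_T\big\|_{L^2}.
\]
Applying the triangle inequality in $L^2$, the Lipschitz estimate for $\phi$, and the elementary bounds $\|X_T - X'_T\|_{L^2} \le \|X-X'\|_{\SSSS^2[0,T]}$, $\|V_T-V'_T\|_{L^2} \le \|V-V'\|_{\SSSS^2[0,T]}$ produces the $(2C,2)$ bound for $\MMM$. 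For $\YYY$ a further triangle inequality gives
\[
\|\YYY(X,V)-\YYY(X',V')\|_{\SSSS^2[0,T]} \le \|\MMM(X,V)-\MMM(X',V')\|_{\SSSS^2[0,T]} + \|V-V'\|_{\SSSS^2[0,T]},
\]
and substituting the $\MMM$ estimate delivers the announced $(2C,3)$ constants, the extra unit on $\|V-V'\|_{\SSSS^2[0,T]}$ coming from the explicit $-V$ term in the definition of $\YYY$.

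There is no genuine obstacle here: the argument is a one-shot application of Doob's inequality followed by two triangle inequalities. The only bookkeeping point is to track the Doob constant $2$ (i.e.\ $p/(p-1)$ with $p=2$) correctly, so that the factor $2C$ rather than, say, $\sqrt{2}\,C$ appears in front of $\|X-X'\|_{\SSSS^2[0,T]}$.
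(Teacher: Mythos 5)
Your argument is correct and is essentially the paper's proof: both rest on Doob's $L^2$ maximal inequality (with constant $2$) applied to the martingale difference, the Lipschitz bound on $\phi$ from (A1.4), and a final triangle inequality to pass from $\MMM$ to $\YYY$; the only cosmetic difference is that you apply Doob once to the combined martingale and then split in $L^2$, whereas the paper splits into the $\phi$-part and the $V$-part first and applies Doob to each. The constants $(2C,2)$ and $(2C,3)$ come out identically either way.
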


\begin{proof}
We first prove the second assertion. By the triangle inequality,
\begin{align*}
&\|\MMM(X, V) -  \MMM(X' , V')\|_{\SSSS^2[0,T]} \\
&\le  \big\|E[\phi(X_T)  -  \phi (X'_T)|\FFF_\cdot]\big\|_{\SSSS^2[0,T]}  +  \big\|E[V_T  -  V'_T|\FFF_\cdot]\big\|_{\SSSS^2[0,T]}\\
&= E\Big[ \sup_{t \in [0,T]} \big|E[\phi(X_T)
-\phi(X'_T)|\FFF_t]\big|^2 \Big]^{1/2} + E\Big[\sup_{t \in [0,T]}
\big|E[V_T - V'_T|\FFF_t]\big|^2 \Big]^{1/2}
\end{align*}
and therefore, by Doob's inequality and the assumption on $\phi$,
\begin{align*}
\|\MMM(X, V) \!-\!  \MMM(X'\!, V')\|_{\SSSS^2[0,T]}
\!&\le \!2 \Big(\! E\big[|\phi(X_T) \!-\!\phi(X'_T)|^2\big]^{1/2} \!\!+\! E\big[|V_T \!-\! V'_T|^2\big]^{1/2} \Big) \\
&\le \!2 C \|X-X'\|_{\SSSS^2[0,T]}+2 \|V-V'\|_{\SSSS^2[0,T]}.
\end{align*}
The estimate for $\|\YYY(X,V) - \YYY(X',V')\|_{\SSSS^2[0,T]}$ then
follows by the application of the triangle inequality.
\end{proof}

The next step consists in introducing appropriate conditions for the
abstract operators $\LLL^1$, $\LLL^2$ and $\LLL^3$. These are given
by the following Lipschitz and boundedness assumptions, which are
the same as introduced by Liang et al. \cite{LiaLyoQia}.

\bigskip
\noindent{\bfseries Assumption (L1):} \emph{The operator $\LLL$
satisfies Assumption \textup{{\bfseries (L1)}} if:
\begin{itemize}
\item[\textup{(L1.1)}] \parbox[t][][l]{\mylength}{$\LLL$ maps $\MMMM^2([0,T],\RR^d)$ into $\OOOO^2([0,T],\RR^p)$, where $\OOOO^2([0,T],\RR^p)$ is either the space $\HHHH^2([0,T],\RR^p)$ or $\SSSS^2([0,T],\RR^p)$.
}
\item[\textup{(L1.2)}] \parbox[t][][l]{\mylength}{ $\LLL$ is bounded and Lipschitz continuous, i.e. there exists a constant $K > 0$ independent of $T$ such that, for all $M, M' \in \MMMM^2([0,T],\RR^d)$,
\begin{align*}
\|\LLL(M)\|_{\OOOO^2[0,T]} &\le K \|M\|_{\SSSS^2[0,T]}, \\
\|\LLL(M)-\LLL(M')\|_{\OOOO^2[0,T]} &\le K\|M-M'\|_{\SSSS^2[0,T]}.
\end{align*}
}
\end{itemize}
}

We will see in \autoref{exun_fbsde} that Assumption {\bfseries (L1)}
is enough to guarantee the local solvability of our system without a
concrete specification of the operators $\LLL^i$. In particular, as
anticipated in the Introduction, the weakness of Assumption
{\bfseries (L1)} allows to consider many different types of
operators within our framework: we emphasize its generality by
giving several examples of possible operators, and we present
potential financial applications.

\begin{expls}\label{expl_mart_repr}
We begin with the classical case of integrand processes generated by
martingale representations.
\begin{enumerate}

\item Assume that $(\FFF_t)_{t \in [0,T]}$ is the augmented filtration generated by the Brownian motion $W$, and take $\OOOO^2([0,T],\RR^p)=\HHHH^2([0,T],\RR^{d \times m})$. Then, we define $\LLL:\MMMM^2([0,T],\RR^d) \to \HHHH^2([0,T],\RR^{d \times m})$ implicitly via It\^o's representation theorem by
\[
M_t=M_0+ \int_0^t \LLL(M)_s d W_s, \quad t \in [0,T].
\]
By It\^o's isometry we have that
\begin{align*}
\|\LLL(M)\|^2_{\HHHH^2[0,T]}&=E\bigg[\int_0^T |\LLL(M)_t|^2 d t\bigg] \\
&=E\big[\big(M_T - M_0\big)^2\!\;\big]=E[M_T^2] - E[M_0^2] \le
\|M\|^2_{\SSSS^2[0,T]},
\end{align*}
and the Lipschitz property follows by the linearity of $\LLL$. Note
that in the case where $\LLL^1=\LLL^2=\LLL^3=\LLL$, the system
\eqref{fbsdyn} is reduced to a classical FBSDE: this shows that
classical FBSDEs can be seen as a special case of the
forward-backward stochastic dynamics \eqref{fbsdyn}.

\item Let now $(\FFF_t)_{t \in [0,T]}$ be a general filtration with just the usual assumptions. As in the previous case, we can take $\OOOO^2([0,T],\RR^p)= \HHHH^2([0,T],\RR^{d \times m})$, and define $\LLL:\MMMM^2([0,T],\RR^d) \to \HHHH^2([0,T],\RR^{d \times m})$ implicitly via the orthogonal decomposition with respect to $W$, i.e.
\[
M_t=\int_0^t \LLL(M)_s d W_s + N_t, \quad t \in [0,T],
\]
where $N$ is some martingale orthogonal with respect to $W$. The
reader may easily notice the connection between this operator and
generalized BSDEs (see \cite{ElKMaz}), and we can prove similarly to
Example \ref{expl_mart_repr} \textup{(}\textit{i}\textup{)} that
$\LLL$ satisfies \textup{{\bfseries (L1)}}, by applying the
Burkholder-Davis-Gundy inequality instead of It\^o's isometry.

We can thus study generalized fully coupled FBSDEs within our
framework, and the choice of the filtration allows us to consider,
in typical financial applications, the case of incomplete markets.
An illustrative example is that of a large investor trading in an
incomplete market: since this investor buys and sells large amounts
of assets, it is reasonable to assume that his trading strategy
affects the prices of the stocks. By considering the corresponding
hedging problem, we thus obtain a fully coupled system, and the
incompleteness of the market leads to a generalized fully coupled
FBSDE. For more details, we refer the reader to \cite{MaYon}.
\end{enumerate}
\end{expls}

\begin{expls}\label{non_class_expls}
While martingale integrand processes are the case most studied in
the literature, they are not the only class of operators fitting in
our framework: there are indeed several other classes of non-local
operators, not considered in the classical FBSDE literature, which
satisfy Assumption \textup{{\bfseries (L1)}}. Let us give some
examples.
\begin{enumerate}

\item Assume that $(\FFF_t)_{t \in [0,T]}$ just satisfies the usual assumptions. We take $\OOOO^2([0,T],\RR^p)= \SSSS^2([0,T],\RR^d)$, and $\LLL:\MMMM^2([0,T],\RR^d) \to \SSSS^2([0,T],\RR^d)$ is simply defined by $\LLL(M):=M$; in this case, Assumption \textup{{\bfseries (L1)}} becomes trivial. In a financial context, $\LLL(M)$ may represent the risky part of the claim $Y=M - V$.

\item Let for simplicity $d=1$. Fix $\widetilde{T} > 0$ and assume that, for $T \le \widetilde{T}$, $\FF=(\FFF_t)_{t \in [0,T]}$ is such that all martingales with respect to $\FF$ are continuous. Let $\OOOO^2([0,T],\RR^p) =\HHHH^2([0,T],\RR)$, and define $\LLL:\MMMM^2([0,T],\RR) \to \HHHH^2([0,T],\RR)$ by
\[
\LLL(M)_t:=\sqrt{E\big[\langle M \rangle_{t,T} \big|\FFF_t\big]},
\quad M \in \MMMM^2([0,T],\RR), \; t \in [0,T],
\]
where for notational simplicity $\langle M \rangle_{t,T}:=\langle M
\rangle_T - \langle M \rangle_t$. Then, by the Kunita-Watanabe and
the conditional Cauchy-Schwarz inequalities, we have that
\begin{align*}
E\big[\langle M, M' \rangle_{t,T} \big|\FFF_t\big] \!\le\!
E\big[|\langle M, M' \rangle_{t,T}| \big|\FFF_t\big]
&\!\le\! E\Big[\sqrt{\langle M \rangle_{t,T} \langle M' \rangle_{t,T}} \Big|\FFF_t\Big] \\
&\!\le\! \sqrt{E[\langle M \rangle_{t,T} |\FFF_t]} \sqrt{E[\langle
M' \rangle_{t,T} |\FFF_t]},
\end{align*}
and therefore, by the bilinearity of $\langle \cdot \rangle_{t,T}$,
\begin{align*}
\big|\LLL(M)_t - \LLL(M')_t \big|^2
&= E[\langle M \rangle_{t,T} |\FFF_t] + E[\langle M' \rangle_{t,T} |\FFF_t] \\
&\phantom{=}- 2 \sqrt{E[\langle M \rangle_{t,T} |\FFF_t]} \sqrt{E[\langle M' \rangle_{t,T} |\FFF_t]} \\
&\le E[\langle M - M' \rangle_{t,T} |\FFF_t]
\end{align*}
for all $t \ge 0$. Hence, by Fubini's theorem,
\begin{align*}
\|\LLL(M) - \LLL(M')\|_{\HHHH^2[0,T]}^2&=\!E\bigg[\int_0^T \!\big|\LLL(M)_t - \LLL(M')_t \big|^2 d t \bigg] \\
&\le\! E\bigg[\int_0^T \!E\big[\langle M \!-\! M' \rangle_{t,T}\big|\FFF_t \big] d t \bigg] \\
&=\! E\bigg[\int_0^T \!\langle M \!-\! M' \rangle_{t,T} d t \bigg]
\le \! \widetilde{T} E[\langle M \!-\! M' \rangle_T].
\end{align*}
By applying the Burkholder-Davis-Gundy inequality, we finally get
the desired Lipschitz property. The boundedness condition is
obtained via similar computations.

\item We choose again for simplicity $d=1$, and we assume that $\FF=(\FFF_t)_{t \in [0,T]}$ is as in Example \ref{non_class_expls} \textup{(}\textit{ii}\textup{)}. We can define $\LLL$ by taking $\OOOO^2([0,T],\RR^p) =\SSSS^2([0,T],\RR)$, and
\[
\LLL:\MMMM^2([0,T],\RR) \to \SSSS^2([0,T],\RR), \quad
\LLL(M)_t:=\sqrt{\langle M \rangle_t}, \; t \in [0,T].
\]
By the Burkholder-Davis-Gundy inequality, we have that
\[
\|\LLL(M)\|^2_{\SSSS^2[0,T]}=E\Big[\sup_{t \in [0,T]} \langle M
\rangle_t\Big]=E[\langle M \rangle_T ] \le K \|M\|^2_{\SSSS^2[0,T]}.
\]
On the other hand, by applying the Kunita-Watanabe inequality,
\begin{align*}
\Big|\sqrt{\langle M \rangle_t} - \sqrt{\langle M' \rangle_t}\Big|^2
&= \langle M \rangle_t + \langle M' \rangle_t - 2 \sqrt{\langle M \rangle_t \langle M' \rangle_t} \\
&\le \langle M \rangle_t + \langle M' \rangle_t - 2 \big|\langle M , M' \rangle_t\big| \\
&\le \langle M \rangle_t + \langle M' \rangle_t - 2 \langle M , M'
\rangle_t =\langle M - M' \rangle_t,
\end{align*}
and the Lipschitz property then follows by applying the
Burkholder-Davis-Gundy inequality as above. We restrict for a moment
to the Brownian setting to give a financial interpretation: in this
case, $\LLL(M)$ can be explicitly rewritten as $\LLL(M)_t=
\sqrt{\int_0^t |Z_s|^2 d s}$, where $Z$ is the martingale integrand
in the It\^o representation of $M$. In the usual BSDE framework for
hedging (see for instance \cite{ElKPenQue}), $\LLL(M)$ is then
closely connected to the accumulated cost of the portfolio strategy:
this could allow us, for instance, to consider storage problems
within our setting.

\item We modify the previous example by combining it with orthogonal decompositions. Let $d=1$ and $\FF=(\FFF_t)_{t \in [0,T]}$ as in Example \ref{non_class_expls} \textup{(}\textit{ii}\textup{)}. Fix a martingale $\widetilde{M}$, and define $\RRR:\MMMM^2([0,T],\RR) \to \MMMM^2([0,T],\RR)$
 as the orthogonal term in the orthogonal decomposition with respect to $\widetilde{M}$, i.e.
\[
M_t=\big(M_t - \RRR(M)_t\big) + \RRR(M)_t, \quad t \in [0,T],
\]
where $\RRR(M)$ is orthogonal with respect to $\widetilde{M}$ and
$M_t - \RRR(M)_t=\int_0^t Z_s d \widetilde{M}_s$ for some process
$Z$. Then, we define the operator $\LLL$ by
\[\LLL:\MMMM^2([0,T],\RR) \to \SSSS^2([0,T],\RR), \quad
\LLL(M)_t:=\sqrt{\langle \RRR(M) \rangle_t}, \; t \in [0,T].
\]
Because of the orthogonality, it is easy to check that, for all $t
\ge 0$,
\begin{align*}
\langle M \rangle_t &= \langle M - \RRR(M) + \RRR(M) \rangle_t \\
&= \langle M - \RRR(M) \rangle_t + \langle \RRR(M) \rangle_t \ge
\langle \RRR(M) \rangle_t,
\end{align*}
and similarly for $M-M'$. Therefore, $\LLL$ satisfies Assumption
{\bfseries (L1)} because of Example \ref{non_class_expls}
\textup{(}\textit{iii}\textup{)}. This operator may have interesting
applications in mathematical finance: namely, in the typical BSDE
framework for hedging in incomplete markets, the operator $\RRR(M)$
represents the non-hedgeable part of the claim. While we cannot
hedge this risk, we can partly incorporate its effect on the price
process since the coefficients $\mu$, $\sigma$ and $f$ are allowed
to depend on $\sqrt{\langle \RRR(M) \rangle_t}$.

\item As a final example, we introduce an operator intimately related to backward equations with time delayed generators (see \cite{DelImk}). Let $\widetilde{T} > 0$ be fixed. For $T \le \widetilde{T}$, let $(\FFF_t)_{t \in [0,T]}$ satisfy the usual assumptions, and $\OOOO^2([0,T],\RR^p)= \HHHH^2([0,T],\RR^{d \times m})$. Motivated by the framework introduced by Dos Reis et al. \cite{DosRevZha}, we can then define $\LLL:\MMMM^2([0,T],\RR^d) \to \HHHH^2([0,T],\RR^{d \times m})$ by
\[
\LLL(M)_t:=\int_{-t}^0 \widehat{\LLL}(M)_{t+s} \alpha_Z (d s),
\]
where $\widehat{\LLL}$ is the operator introduced in Example
\ref{expl_mart_repr} \textup{(}\textit{ii}\textup{)}, and $\alpha_Z$
is a non-random finite measure with support in $[-\widetilde{T},0]$.
We observe that $\LLL$ is closely related to the operator of Example
\ref{non_class_expls} \textup{(}\textit{iii}\textup{)} when
$\alpha_Z$ is the Lebesgue measure restricted to
$[-\widetilde{T},0]$. Moreover, by applying the change of
integration order proved in \cite{DosRevZha}, we can show that
\begin{equation}\label{change_order}
\begin{split}
\|\LLL(M)\|_{\HHHH^2[0,T]}&=E\bigg[\int_0^T \bigg|\int_{-t}^0 \widehat{\LLL}(M)_{t+s} \alpha_Z (d s)\bigg|^2 d t\bigg]^{1/2} \\
&\le \alpha_Z ([-\widetilde{T},0])
\|\widehat{\LLL}(M)\|_{\HHHH^2[0,T]} \le K \|M\|_{\SSSS^2[0,T]}
\end{split}
\end{equation}
for some constant $K=K(\widetilde{T})$, where the last inequality
follows from the boundedness of $\widehat{\LLL}$. Assumption
\textup{{\bfseries (L1)}} then follows by linearity: this will
therefore allow us to consider a special class of fully coupled
forward-backward equations with delayed generators.

However, the treatment of backward stochastic delayed equations in
their full generality would require to replace $\YYY(X,V)$ in
\eqref{fbfunctdiffeq} by a new operator $\YY(X,V)_t:=\int_{-t}^0
\YYY(X,V)_{t+s} \alpha_Y (d s)$, where $\alpha_Y$ is a non-random
finite measure: as pointed out in \cite{DosRevZha}, this kind of
equation is very difficult to study even in the simple decoupled
case, and it will not be treated here. For an overview of the
several applications of backward equations with time delay to
mathematical finance, the reader may consult for instance the
article by Delong \cite{Delong}.

\end{enumerate}
\end{expls}

The broad generality of the above examples should help us understand
the importance of the next theorem. This is the main result of this
section, and gives the existence of a unique square-integrable
solution to the functional differential system \eqref{fbfunctdiffeq}
on sufficiently small intervals $[0,T]$, provided that the operators
$\LLL^i$ satisfy Assumption {\bfseries (L1)}. This gives us a great
flexibility in the choice of $\LLL^i$, allowing to study many
different types of fully coupled, non-classical forward-backward
stochastic dynamics.

\begin{theorem}\label{exun_fbsde}
Let $\mu$, $\sigma$, $f$ and $\phi$ satisfy Assumption
\textup{{\bfseries (A1)}} with respect to the constant $C$.
Furthermore, assume that $\LLL^1$, $\LLL^3$ satisfy Assumption
\textup{{\bfseries (L1)}} and that $\LLL^2$ satisfies
\textup{{\bfseries (L1)}} with respect to $\OOOO^2([0,T],\RR^{p_2})
=\SSSS^2([0,T],\RR^{p_2})$, denoting by $K$ the common Lipschitz
constant of $\LLL^1$, $\LLL^2$, $\LLL^3$. Then there is a constant
$\ell=\ell(C,K)$ depending only on $C$ and $K$ so that, for $T <
\ell$, \eqref{fbfunctdiffeq} admits a unique solution $(X,V)$ in
$\SSSS^2([0,T],\RR^n) \times \SSSS^2([0,T],\RR^d)$.
\end{theorem}

\begin{proof}
From now on, we write $\|\cdot\|_\OOOO$ for the norm
$\|\cdot\|_{\OOOO^2[0,T]}$ for notational simplicity. Moreover, we
denote the product space $\SSSS^2([0,T],\RR^n) \times
\SSSS^2([0,T],\RR^d)$ by $\SSSS^2_X \times \SSSS^2_V$, and endow it
with the norm
\[
\|(X,V)\|_{\SSSS^2_X \times
\SSSS^2_V}:=\sqrt{\|X\|_{\SSSS^2}^2+\|V\|_{\SSSS^2}^2}, \quad (X,V)
\in \SSSS^2_X \times \SSSS^2_V.
\]
The mapping $\LL:\SSSS^2_X \times \SSSS^2_V \rightarrow \SSSS^2_X
\times \SSSS^2_V$, $\LL(X,V):=({\widetilde X},{\widetilde V})$, is
defined as follows: first, ${\widetilde X}$ is constructed as the
unique solution in $\SSSS^2([0,T],\RR^n)$ to the forward stochastic
differential equation \small
\begin{numcases}{\hspace{-0.2em}}
d {\widetilde X}_t= \mu(t,{\widetilde X}_t, \YYY(X\!, V)_t, \LLL^1(\MMM(X\!, V))_t) d t \!+\! \sigma(t,{\widetilde X}_t,\YYY(X\!, V)_t,\LLL^2(\MMM(X\!, V))_t) d W_t, \nonumber \\
{\widetilde X}_0=x. \label{sdeX}
\end{numcases}
\normalsize Then, once ${\widetilde X}$ has been obtained,
${\widetilde V}$ is given explicitly by the expression
\begin{equation}\label{sdeV}
{\widetilde V}_t=\int_0^t f(s,{\widetilde X}_s, \YYY(X,V)_s,
\LLL^3(\MMM(X,V))_s) d s.
\end{equation}

We show that the mapping $\LL$ is well defined and maps $\SSSS^2_X
\times \SSSS^2_V$ into itself. First of all, we note that the
existence of a unique solution in $\SSSS^2([0,T],\RR^n)$ to
\eqref{sdeX} follows by Assumption \textbf{(A1)} and the results on
stochastic differential equations with monotonous coefficients
obtained, for instance, by Rozovsky \cite{Roz}: indeed, by setting
$\bar{\mu}(t,x):=\mu(t,x, \YYY(X, V)_t, \LLL^1(\MMM(X, V))_t)$ and
$\bar{\sigma}(t,x):=\sigma(t,x, \YYY(X, V)_t,\LLL^2(\MMM(X, V))_t)$,
the reader can easily verify that all the conditions of \cite{Roz}
are satisfied, as $\|\YYY(X,V)\|_{\SSSS^2}$ and
$\|\MMM(X,V)\|_{\SSSS^2}$ are finite by \autoref{fbest_y&m}. On the
other hand, it is easy to verify that $\widetilde{V} \in
\SSSS^2([0,T],\RR^d)$, as for $f_0:=f(\cdot,0,0,0)$,
\begin{align*}
&\|\widetilde{V}\|_{\SSSS^2} \le  \sqrt{T} \Big(\|f_0\|_{\HHHH^2}+ \|f\big(\cdot,{\widetilde X}_\cdot,\YYY(X,V)_\cdot,\LLL^3(\MMM(X,V))_\cdot\big)- f_0\|_{\HHHH^2}\Big) \\
&\!\le \!\! \sqrt{T}\Big(\|f_0\|_{\HHHH^2}+ C (\sqrt{T} \vee 1) \big( \|\widetilde{X}\|_{\SSSS^2}+\|\YYY(X, V)\|_{\SSSS^2}+\|\LLL^3(\MMM(X, V))\|_{\OOOO^2}\big)\Big) \\
&\!\le \!\! \sqrt{T}\Big(\!\|f_0\|_{\HHHH^2}\!+ C (\sqrt{T} \vee 1)
\big(\|\widetilde{X}\|_{\SSSS^2}\!+\!\|\YYY(X,V)\|_{\SSSS^2}\!+\!K
\|\MMM(X,V)\|_{\SSSS^2}\big)\!\Big)\!<\! \infty.
\end{align*}

Since the pair $(X,V)$ is a solution of \eqref{fbfunctdiffeq} if and
only if it is a fixed point of $\LL$, it suffices to prove that
$\LL$ is a contraction on $\SSSS^2_X \times \SSSS^2_V$ for small
enough $T > 0$. Let $\LL(X^1,V^1)=({\widetilde X}^1,{\widetilde
V}^1)$, $\LL(X^2,V^2)=({\widetilde X}^2,{\widetilde V}^2)$, and
assume without loss of generality that $T \le 1$. By It\^o's
formula, we can compute that
\begin{align*}
&d|{\widetilde X}^1_t - {\widetilde X}^2_t|^2 = 2 ({\widetilde X}^1_t - {\widetilde X}^2_t)^{\mathrm T} d({\widetilde X}^1 - {\widetilde X}^2)_t + d \langle {\widetilde X}^1 - {\widetilde X}^2 \rangle_t \\
&=2\big({\widetilde X}^1_t - {\widetilde X}^2_t\big)^{\mathrm T} \Big(\mu\big(t,{\widetilde X}^1_t, \YYY(X^1,V^1)_t, \LLL^1(\MMM(X^1,V^1))_t\big) \\
&\phantom{=2\big({\widetilde X}^1_t - {\widetilde X}^2_t\big)^{\mathrm T} \big)}- \mu\big(t,{\widetilde X}^2_t, \YYY(X^2,V^2)_t, \LLL^1(\MMM(X^2,V^2))_t\big) \Big) d t \\
&\phantom{=}+ 2 \big({\widetilde X}^1_t-{\widetilde X}^2_t\big)^{\mathrm T} \Big(\sigma\big(t, {\widetilde X}^1_t,\YYY(X^1,V^1)_t,\LLL^2(\MMM(X^1,V^1))_t\big) \\
&\phantom{=2\big({\widetilde X}^1_t - {\widetilde X}^2_t\big)^{\mathrm T} \big)}-\sigma\big(t,{\widetilde X}^2_t,\YYY(X^2,V^2)_t,\LLL^2(\MMM(X^2,V^2))_t\big)\Big) d W_t \\
&\phantom{=}+\big|\sigma\big(t, {\widetilde X}^1_t,\YYY(X^1,V^1)_t,\LLL^2(\MMM(X^1,V^1))_t\big) \\
&\phantom{= +\big|} - \sigma\big(t,{\widetilde
X}^2_t,\YYY(X^2,V^2)_t,\LLL^2(\MMM(X^2,V^2))_t\big)\big|^2 d t
\end{align*}
for any $t \ge 0$. Thus, by applying Assumption (A1.2), (A1.4) and
classical inequalities we obtain that, for a constant
$\theta_1=\theta_1(C)$ depending only on $C$,
\begin{align}
&\|{\widetilde X}^1 - {\widetilde X}^2\|_{\SSSS^2}^2=E\bigg[\sup_{t \in [0,T]} |{\widetilde X}^1_t - {\widetilde X}^2_t|^2 \bigg] \nonumber \\
&\le \theta_1 \Bigg(\!E\bigg[\int_0^T\!\! |{\widetilde X}^1_s -
{\widetilde X}^2_s|^2 d s \bigg]
 \!+\! E\bigg[\int_0^T \!\!|{\widetilde X}^1_s - {\widetilde X}^2_s| |\YYY(X^1,V^1)_s \!-\! \YYY(X^2,V^2)_s| d s \bigg] \nonumber \\
&\phantom{=}+ E\bigg[\int_0^T |{\widetilde X}^1_s - {\widetilde X}^2_s|  |\LLL^1(\MMM(X^1,V^1))_s - \LLL^1(\MMM(X^2,V^2))_s| d s \bigg] \nonumber \\
&\phantom{=}+ E\bigg[\int_0^T |\YYY(X^1,V^1)_s - \YYY(X^2,V^2)_s|^2 d s \bigg] \nonumber \\
&\phantom{=}+ E\bigg[\int_0^T |\LLL^2(\MMM(X^1,V^1))_s - \LLL^2(\MMM(X^2,V^2))_s|^2 d s \bigg] \nonumber\\
&\phantom{=}+E\bigg[\bigg(\!\int_0^T\!\! |{\widetilde X}^1_s - {\widetilde X}^2_s|^2 \big( |{\widetilde X}^1_s - {\widetilde X}^2_s|^2 \! + \!  |\YYY(X^1,V^1)_s - \YYY(X^2,V^2)_s|^2 \nonumber \\
&\phantom{=+E\bigg[\bigg(\!\int_0^T\!\! |{\widetilde X}^1_s - } + |\LLL^2(\MMM(X^1,V^1))_s - \LLL^2(\MMM(X^2,V^2))_s|^2 \big) d s \bigg)^{1/2} \bigg]\Bigg)\nonumber \\
&=: \theta_1 \Big(I_1+I_2+I_3+I_4+I_5+I_6\Big). \label{MainEst}
\end{align}
\indent The next step consists in deriving estimates for all the
terms on the right hand side of this inequality. $I_1$ can simply be
estimated by
\begin{equation} \label{Est0}
I_1 \le T \|{\widetilde X}^1 - {\widetilde X}^2 \|^2_{\SSSS^2}.
\end{equation}
For $I_2$, we obtain by the Cauchy-Schwarz inequality that
\[
I_2 \le \frac{T}{2} \Big( \|{\widetilde X}^1 - {\widetilde
X}^2\|^2_{\SSSS^2} + \|\YYY(X^1,V^1) - \YYY(X^2,V^2)\|^2_{\SSSS^2}
\Big).
\]
Because of assumption (A1.4) on $\phi$, we can apply
\autoref{fbest_y&m} and get that
\begin{equation}
I_2 \le \theta_2 T \Big( \|{\widetilde X}^1 - {\widetilde
X}^2\|^2_{\SSSS^2} + \|V^1 - V^2\|^2_{\SSSS^2} + \|X^1 -
X^2\|^2_{\SSSS^2} \Big), \label{Est1}
\end{equation}
for some constant $\theta_2=\theta_2(C)$ depending on $C$. To
estimate $I_3$, one can apply again the Cauchy-Schwarz inequality to
get that
\begin{align*}
I_3 &\le \frac{T}{2 \eps} \|{\widetilde X}^1 - {\widetilde X}^2\|^2_{\SSSS^2} + \frac{\eps}{2} E\bigg[\int_0^T |\LLL^1(\MMM(X^1,V^1))_s - \LLL^1(\MMM(X^2,V^2))_s|^2 d s \bigg] \\
&\le \frac{\sqrt{T}}{2} \Big( \|{\widetilde X}^1 - {\widetilde
X}^2\|^2_{\SSSS^2} + \|\LLL^1(\MMM(X^1,V^1)) -
\LLL^1(\MMM(X^2,V^2))\|^2_{\OOOO^2} \Big),
\end{align*}
where the last inequality is obtained by taking $\eps=\sqrt{T}$ if
$\OOOO^2=\HHHH^2$ and $\eps=1$ if $\OOOO^2=\SSSS^2$ (remember that
$T \le 1$). On the other hand, by Assumption \textbf{(L1)} we know
that
\[
\|\LLL^1(\MMM(X^1,V^1)) - \LLL^1(\MMM(X^2,V^2))\|^2_{\OOOO^2} \le
K^2 \|\MMM(X^1,V^1) - \MMM(X^2,V^2)\|^2_{\SSSS^2},
\]
and by applying \autoref{fbest_y&m},
\begin{equation}
I_3 \le \theta_3 \sqrt{T}\Big( \|{\widetilde X}^1 - {\widetilde
X}^2\|^2_{\SSSS^2} + \|V^1 - V^2\|^2_{\SSSS^2} + \|X^1 -
X^2\|^2_{\SSSS^2} \Big), \label{Est2}
\end{equation}
for a constant $\theta_3=\theta_3(C,K)$ depending only on $C$ and
$K$. $I_4$ is estimated by using the same argument as for
\eqref{Est1}, obtaining that
\begin{equation}
I_4 \le \theta_4 T \Big( \|V^1 - V^2\|^2_{\SSSS^2} + \|X^1 -
X^2\|^2_{\SSSS^2} \Big), \label{Est3}
\end{equation}
for some constant $\theta_4=\theta_4(C)$. For $I_5$, we have that
\begin{align*}
I_5 &\le T \|\LLL^2(\MMM(X^1,V^1)) - \LLL^2(\MMM(X^2,V^2))\|^2_{\SSSS^2} \\
&\le K^2 T \|\MMM(X^1,V^1) - \MMM(X^2,V^2)\|^2_{\SSSS^2},
\end{align*}
and hence, by \autoref{fbest_y&m},
\begin{equation}
I_5 \le \theta_5 T \Big( \|V^1 - V^2\|^2_{\SSSS^2} + \|X^1 -
X^2\|^2_{\SSSS^2} \Big), \label{Est4}
\end{equation}
for $\theta_5=\theta_5(K)$. It only remains to estimate the last
term $I_6$: for notational simplicity, we introduce the process
$A^i:=\Big(\YYY(X^i,V^i), \LLL^2(\MMM(X^i,V^i))\Big)$ for $i=1,2$.
By applying the Cauchy-Schwarz inequality, we can verify that
\begin{align*}
I_6&=E\bigg[\bigg(\int_0^T |{\widetilde X}^1_s - {\widetilde X}^2_s|^2 \big( |{\widetilde X}^1_s - {\widetilde X}^2_s|^2 + |A^1_s - A^2_s|^2 \big) d s \bigg)^{1/2} \bigg] \\
&\le E\bigg[\Big(\sup_{t \in [0,T]} |{\widetilde X}^1_t - {\widetilde X}^2_t|^2 \Big)^{1/2} \bigg(\int_0^T \big( |{\widetilde X}^1_s - {\widetilde X}^2_s|^2+|A^1_s - A^2_s|^2 \big) d s \bigg)^{1/2} \bigg] \\
&\le \frac{\sqrt{T}}{2} E\Big[\sup_{t \in [0,T]} |{\widetilde X}^1_t
- {\widetilde X}^2_t|^2 \Big] + \frac{1}{2 \sqrt{T}} E\bigg[\int_0^T
\big( |{\widetilde X}^1_s- {\widetilde X}^2_s|^2+|A^1_s - A^2_s|^2
\big) d s \bigg],
\end{align*}
and it is not difficult to check that
\[
E\bigg[\int_0^T \big( |{\widetilde X}^1_s - {\widetilde
X}^2_s|^2+|A^1_s - A^2_s|^2 \big) d s \bigg] \le T
\Big(\|{\widetilde X}^1 - {\widetilde X}^2\|^2_{\SSSS^2} + \|A^1 -
A^2\|^2_{\SSSS^2}\Big),
\]
which finally leads us to
\begin{equation}
I_6 \le \theta_6 \sqrt{T} \Big(\|{\widetilde X}^1 - {\widetilde
X}^2\|^2_{\SSSS^2} + \|V^1 - V^2\|^2_{\SSSS^2} + \|X^1 -
X^2\|^2_{\SSSS^2} \Big), \label{Est5}
\end{equation}
for some constant $\theta_6=\theta_6(C)$. Therefore, we can plug the
estimates \eqref{Est0}--\eqref{Est5} back into \eqref{MainEst},
obtaining the inequality
\begin{equation}
\|{\widetilde X}^1 - {\widetilde X}^2\|_{\SSSS^2}^2 \le \theta_7
\sqrt{T} \Big(\|{\widetilde X}^1 - {\widetilde X}^2\|^2_{\SSSS^2} +
\|V^1 - V^2\|^2_{\SSSS^2} + \|X^1 - X^2\|^2_{\SSSS^2} \Big),
\label{Est_X}
\end{equation}
for some constant $\theta_7=\theta_7(C,K)$. On the other hand,
thanks to the explicit nature of the functional differential
equation \eqref{sdeV}, we can estimate $\|{\widetilde V}^1 -
{\widetilde V}^2\|_{\SSSS^2}$ as follows:
\begin{align*}
\|{\widetilde V}^1 &- {\widetilde V}^2\|_{\SSSS^2}^2
\le E\bigg[\bigg(\int_0^T \Big| f\big(s,{\widetilde X}^1_s, \YYY(X^1,V^1)_s, \LLL^3(\MMM(X^1,V^1))_s\big) \\
&\phantom{{\widetilde V}^2\|_{\SSSS^2}^2
\le E\bigg[\bigg(} - f\big(s,{\widetilde X}^2_s, \YYY(X^2,V^2)_s, \LLL^3(\MMM(X^2,V^2))_s\big) \Big| d s\bigg)^2\bigg] \\
&\le T E\bigg[\int_0^T \Big| f\big(s,{\widetilde X}^1_s, \YYY(X^1,V^1)_s, \LLL^3(\MMM(X^1,V^1))_s\big) \\
&\phantom{\le E\bigg[\bigg(} - f\big(s,{\widetilde X}^2_s, \YYY(X^2,V^2)_s, \LLL^3(\MMM(X^2,V^2))_s\big) \Big|^2 d s\bigg]  \\
&\le 3 C^2 T \bigg( E\bigg[\int_0^T\!\big|{\widetilde X}^1_s - {\widetilde X}^2_s\big|^2 d s\bigg] + E\bigg[\int_0^T \!\big|\YYY(X^1\!,V^1)_s - \YYY(X^2\!,V^2)_s\big|^2 d s \bigg] \\
&\phantom{\le} + E\bigg[\int_0^T \big|\LLL^3(\MMM(X^1,V^1))_s - \LLL^3(\MMM(X^2,V^2))_s\big|^2 d s \bigg] \bigg) \\
&\le 3 C^2 T \Big( \|{\widetilde X}^1 - {\widetilde X}^2\|_{\SSSS^2}^2 + \|\YYY(X^1,V^1) - \YYY(X^2,V^2)\|_{\SSSS^2}^2 \\
&\phantom{\le 3 C^2 T \Big( } + \|\LLL^3(\MMM(X^1,V^1)) -
\LLL^3(\MMM(X^2,V^2))\|_{\OOOO^2}^2 \Big),
\end{align*}
and by using the same arguments as for the estimates
\eqref{Est0}--\eqref{Est5} this yields that, for a constant
$\theta_8=\theta_8(C,K)$,
\begin{equation}
\|{\widetilde V}^1 - {\widetilde V}^2\|_{\SSSS^2}^2 \le \theta_8
\sqrt{T} \Big(\|{\widetilde X}^1 - {\widetilde X}^2\|_{\SSSS^2}^2 +
\|V^1 - V^2\|_{\SSSS^2}^2 + \|X^1_t - X^2_t\|_{\SSSS^2}^2 \Big).
\label{Est_V}
\end{equation}
Hence, we can sum the inequalities \eqref{Est_X} and \eqref{Est_V},
obtaining a constant $\theta_9=\theta_9(C,K)$ depending only on $C$
and $K$ such that
\begin{multline*}
\|({\widetilde X}^1,{\widetilde V}^1) - ({\widetilde X}^2,{\widetilde V}^2)\|_{\SSSS^2_X \times \SSSS^2_V}^2 \le \theta_9 \sqrt{T} \Big(\|{\widetilde X}^1 - {\widetilde X}^2\|_{\SSSS^2}^2 +\|V^1 - V^2\|_{\SSSS^2}^2 +\|X^1 - X^2\|_{\SSSS^2}^2 \Big) \\
\le \theta_9 \sqrt{T} \Big(\|({\widetilde X}^1,{\widetilde V}^1) -
({\widetilde X}^2,{\widetilde V}^2)\|_{\SSSS^2_X \times \SSSS^2_V}^2
+ \|(X^1,V^1) - (X^2,V^2)\|_{\SSSS^2_X \times \SSSS^2_V}^2 \Big),
\end{multline*}
which implies that, for $T>0$ such that $\theta_9 \sqrt{T} < 1/2$,
\[
\|({\widetilde X}^1,{\widetilde V}^1) - ({\widetilde
X}^2,{\widetilde V}^2)\|_{\SSSS^2_X \times \SSSS^2_V}^2 \le
\underbrace{\bigg(\frac{1}{\theta_9 \sqrt{T}} -1\bigg)^{-1}}_{< 1}
\|(X^1,V^1) - (X^2,V^2)\|_{\SSSS^2_X \times \SSSS^2_V}^2.
\]
Therefore, $\LL$ is a contraction if $T< \ell(C,K):=\frac{1}{4
\theta_9^2(C,K)} \wedge 1$, and thus admits a unique fixed point
$(X,V)$.
\end{proof}

\begin{rem}
Under the general assumption \textup{{\bfseries (A1)}}, it is not
possible to extend \autoref{exun_fbsde} to the case where $\LLL^2$
satisfies {\bfseries (L1)} with respect to $\OOOO^2([0,T],\RR^{p_2})
=\HHHH^2([0,T],\RR^{p_2})$, as shown by the following counterexample
borrowed from the theory of FBSDEs. Assume we have an augmented
Brownian filtration, and consider the functional differential system
\[
\begin{cases}
d X_t = \LLL^2(\MMM^\phi(X,V))_t d W_t, \\
d V_t = 0, \\
X_0=V_0=0,
\end{cases}
\]
where $\LLL^2$ is the operator given by It\^o's representation, and
$\phi(x):=x + W_T$. We thus obtain that $V \equiv 0$, and the first
equation can be rewritten as
\[
d X_t = d \MMM^\phi(X,0)_t, \quad X_0=0.
\]
Assume it has an adapted solution $X$. We would then have
$X_t=E[X_T+W_T|\FFF_t]$ for all $t \ge 0$, which would lead in
particular, for $t=T$, to the contradiction $W_T=0$.
\end{rem}

\begin{rem}\label{rem_initialtime}
While already quite general, \autoref{exun_fbsde} can be further
extended in a number of different ways. We present some possible
extensions which only require slight modifications of the proof
presented above: the details are left to the reader.
\begin{enumerate}
\item First of all, \autoref{exun_fbsde} can be extended to any initial time $\tau >0$. More exactly, consider functional differential systems on $[\tau,T]$ of the form
\small
\[
\hspace{-0.5em}\begin{cases}
d X_t= \mu(t,X_t, \YYY(X,V)_t, \LLL^1(\MMM(X,V))_t) d t + \sigma(t,X_t,\YYY(X,V)_t, \LLL^2(\MMM(X,V))_t) d W_t, \\
d V_t=f(t,X_t, \YYY(X,V)_t, \LLL^3(\MMM(X,V))_t) d t, \\
X_\tau=\eta, \quad V_\tau=\zeta,
\end{cases}
\]
\normalsize where $\eta, \zeta \in L^2(\FFF_\tau)$. Then it is
possible to prove that, under the same conditions of
\autoref{exun_fbsde} and provided that $T-\tau < \ell$, there is a
unique solution in $\SSSS^2([\tau,T],\RR^n) \times
\SSSS^2([\tau,T],\RR^d)$. This remark will be essential in the next
section, where we extend the solvability to any time interval.

\item It is not difficult to see that \autoref{exun_fbsde} also holds for operators $\LLL^1$ and $\LLL^3$ of the form $\LLL^i=(\LLL^i_1, \cdots, \LLL^i_{k_i}, \LLL^i_{k_i+1}, \dots, \LLL^i_{l_i})$, $i=1,3$, where $\LLL^i_j$ satisfies Assumption {\bfseries (L1)} with respect to $\SSSS^2\big([0,T],\RR^{p^i_j}\big)$ for $1 \le j \le k_i$, and with respect to $\HHHH^2\big([0,T],\RR^{p^i_j}\big)$ for $k_i+1 \le j \le l_i$: indeed, it suffices to consider the space $\OOOO^2\big([0,T],\RR^{p^i}\big):=\prod_{j=1}^{k_i} \SSSS^2\big([0,T],\RR^{p^i_j}\big) \times \prod_{j=k_i+1}^{l_i} \HHHH^2\big([0,T],\RR^{p^i_j}\big)$ and take the norm given by
\[
\|L\|^2_{\OOOO^2([0,T],\RR^{p^i})}:= \sum_{j=1}^{k_i}
\|L\|^2_{\SSSS^2([0,T],\RR^{p^i_j})} + \sum_{j=k_i+1}^{l_i}
\|L\|^2_{\HHHH^2([0,T],\RR^{p^i_j})}.
\]

\item The result of \autoref{exun_fbsde} can also be extended to other types of terminal condition. Indeed, assume that $\DD([0,T],\RR^n)$ denotes the space of all $\RR^n$-valued c\`adl\`ag functions on $[0,T]$, and let $\Phi: \Omega \times \DD([0,T],\RR^n) \rightarrow \RR^d$ satisfy the $L^\infty$-Lipschitz condition
\[
|\Phi(\mathbf{x}) - \Phi(\mathbf{x}')| \le C \sup_{t \in [0,T]}
|\mathbf{x}_t - \mathbf{x}'_t| \quad P\text{-a.s.} \quad \forall \,
\mathbf{x},\mathbf{x}' \in \DD([0,T],\RR^n),
\]
where $C > 0$. Then, the operators
\[
\overline{\!\MMM}(X,V)_t:=E[\Phi(X) + V_T|\FFF_t], \quad
\overline{\YYY}(X,V)_t:=\overline{\!\MMM}(X,V)_t - V_t,
\]
satisfy estimates similar to those of \autoref{fbest_y&m}.
Therefore, the reader can easily verify that \autoref{exun_fbsde}
remains valid if we substitute the operators $\MMM$ and $\YYY$ in
the system \eqref{fbfunctdiffeq} by $\overline{\!\MMM}$ and
$\overline{\YYY}$. Moreover, the same conclusion remains true if
$\Phi$ satisfies the $L^1$-Lipschitz condition
\[
|\Phi(\mathbf{x}) - \Phi(\mathbf{x}')| \le C \int_0^T |\mathbf{x}_t
- \mathbf{x}'_t| d t \quad P\text{-a.s.} \quad \forall \,
\mathbf{x},\mathbf{x}' \in \DD([0,T],\RR^n),
\]
instead of the above $L^\infty$-Lipschitz condition. Two typical
examples are the functionals $\Phi^1(\mathbf{x})=\sup_{t \in [0,T]}
|\mathbf{x}_t|$ and $\Phi^2(\mathbf{x})=\int_0^T \mathbf{x}_t d t$,
which are related to lookback and Asian options.

\item Finally, it is possible to generalize the system \eqref{fbfunctdiffeq} by substituting the equation for the component $V$ by
\[
d V_t=f(t,X_t, \YYY(X,V)_t, \LLL^3(\MMM(X,V))_t) \alpha_V(d t),
\quad V_0=0,
\]
where, for some $\widetilde{T} > 0$, $\alpha_V$ is a non-random,
finite Borel measure on $[0,\widetilde{T}]$ such that
$\alpha_V(\{0\})=0$. This is done by applying a change of
integration order similar to the one discussed in
\eqref{change_order}.
\end{enumerate}
\end{rem}

We conclude this section by observing that the flexibility in the
choice of the operators $\LLL^i$ opens the door to probabilistic
interpretations for many classes of integro-partial differential
equations, similarly to the well known non-linear Feynman-Kac
formula for BSDEs. We do not attempt to investigate this problem
into more detail here, leaving it for future research.

\section{Solvability on arbitrary intervals}\label{section_global_sol}

After proving in \autoref{exun_fbsde} the local solvability of the
fully coupled system \eqref{fbfunctdiffeq}, the next step consists
in showing the existence and uniqueness of solutions on arbitrarily
large time intervals. In the case of simple, non-coupled Lipschitz
functional differential equations, Liang et al. \cite{LiaLyoQia}
showed that the existence and uniqueness of solutions can be
obtained by imposing additional conditions on the operator $\LLL$,
while leaving the assumptions on the driver $f$ and the terminal
condition $\xi$ unchanged.

However, the situation is quite different for coupled systems: as it
is now well known in the theory of classical FBSDEs, an extension of
\autoref{exun_fbsde} to arbitrary intervals is possible only if we
impose additional assumptions on the coefficients $\mu$, $\sigma$,
$f$, $\phi$ of the fully coupled system \eqref{fbfunctdiffeq}, since
the solution could explode for large time horizons (without going
into more detail, we refer the reader to classical counterexamples
for Markovian FBSDEs and related PDEs which can be found for
instance in \cite{MaYon}). Several techniques have been adopted for
classical FBSDEs to overcome this difficulty, as we mentioned in the
Introduction. However, we prefer to apply another approach which in
our opinion is the most natural in the case of functional
differential systems.

First of all, we briefly discuss the intuition. Similarly to
\cite{LiaLyoQia}, the first step consists in dividing the interval
$[0,T]$ into a finite number of subintervals $I_j:=[T_{j-1}, T_j]$,
$0=T_0 < \dots < T_N=T$: then, we solve the system separately on any
subinterval, starting from the last one and going backward. There
is, however, an important additional difficulty with respect to
\cite{LiaLyoQia}, which clarifies why additional assumptions on the
coefficients are needed. For such an approach to work, we need in
fact that the length of the subintervals $I_j$, on which the system
has to be solvable, can be bounded by below by a constant
independent of $j$. We have seen that such a length depends on the
Lipschitz constants $C$ and $K$ of the system, and we can observe
that the only potential complication can arise from the terminal
condition on each $I_j$: indeed, while the other coefficients $\mu$,
$\sigma$, $f$ and $\LLL^i$ remain the same on all intervals $I_j$,
the terminal condition of the subsystem on $I_j$ is given by
$\Xi^j=\YYY(X^{j+1}, V^{j+1})_{T_j}$, where $(X^{j+1}, V^{j+1})$ is
the local solution on $I_{j+1}$. To obtain the desired lower bound
for the length for all $I_j$, it is therefore sufficient that
$\Xi^j=\theta_j(X^{j+1}_{T_j})$, where $\theta_j(\omega, x)$ is for
each $j$ Lipschitz continuous in $x$ with respect to some constant
$C$ \emph{independent of $j$}.

Due to the strong coupling between the two functional differential
equations, this uniform Lipschitz continuity can be obtained only by
studying the interplay between the two components $X$ and $V$.
However, this is very difficult without a concrete expression for
the operators $\LLL^i$, and is especially true in the case of
non-local operators (namely, the fact that $\LLL(M)$ could still
depend on the whole path of $(M_t)_{t \in [0,T]}$ may easily cause
the explosion of the solution for large time intervals). These
observations lead us to the conclusion that the solvability on
arbitrary intervals of the fully coupled system
\eqref{fbfunctdiffeq} has to be treated on a case-by-case basis, by
developing tailor-made techniques for each choice of the operators
$\LLL^i$.

As an example, we study in the following the case where the system
\eqref{fbfunctdiffeq} is associated to a classical Brownian FBSDE.
Hence, we will assume for the rest of the article that the
filtration is generated by an $m$-dimensional Brownian motion
$(W_t)_{t \in [0,T]}$ on $(\Om, \FFF,P)$, and that
$\LLL^1=\LLL^2=\LLL^3=\LLL$, where $\LLL$ is the operator given by
It\^o's representation theorem. For notational simplicity, we will
write $\ZZZ(X,V)=\LLL(\MMM(X,V))$.

In the literature of classical FBSDEs, it is critical to distinguish
between the cases where the coefficients $\mu$, $\sigma$, $f$ and
$\phi$ are purely deterministic (i.e. they do not depend on
$\omega$) or not. This is due to the fact that, in the purely
deterministic case, one can exploit the well known connection
between FBSDEs and parabolic PDEs. In the framework of functional
differential equations, such an approach has been adopted by Liang
et al. \cite{LiaLyoQia2} in a very special case, where they
essentially rely on the Lipschitz continuity of the solution of the
corresponding parabolic PDEs (see for example Delarue \cite{Del,
Del2}).


We are, however, more interested in the case where the coefficients
$\mu$, $\sigma$, $f$ and $\phi$ are allowed to be random. In the
literature, there are several articles dedicated to the random case:
in particular, Zhang \cite{Zha1,Zha2} proved the solvability of
classical FBSDEs by deriving uniform Lipschitz estimates with
respect to the initial condition of the component $X$. In the
following, after reformulating in our setting Zhang's results on
uniform Lipschitz continuity, we briefly illustrate how we can use
these results together with \autoref{exun_fbsde} to construct a
solution to our coupled functional differential system. \emph{For
the rest of this section, we will assume that $n=1$, i.e. the
component $X$ is $1$-dimensional, and that $\sigma$ does not depend
on $z_2$, i.e. $\sigma=\sigma(t,x,y)$.} Moreover, we assume that the
coefficients of \eqref{fbfunctdiffeq} satisfy the following
condition:

\bigskip
\noindent{\bfseries Assumption (A2):}  \emph{The functions $\mu:
\Omega \times[0,T]\times\RR\times \RR^d \times \RR^{d \times
m}\rightarrow \RR$, $\sigma: \Omega \times[0,T]\times \RR \times
\RR^d\rightarrow \RR^m$, $f: \Omega \times[0,T]\times \RR\times
\RR^d \times \RR^{d \times m} \rightarrow \RR^d$ and $\phi: \Omega
\times\RR\rightarrow \RR^d$ satisfy Assumption \textup{{\bfseries
(A2)}} if they satisfy Assumption \textup{{\bfseries (A1)}} with
\textup{(A1.2)} replaced by \textup{(A1.2')}, and:
\begin{itemize}
\item[\textup{(A2.1)}] \parbox[t][][l]{\mylength}{$\phi$ is uniformly Lipschitz in $x$ with constant $C'$.
}
\item[\textup{(A2.2)}] \parbox[t][][l]{\mylength}{There is a constant $\gamma > 0$ such that
\begin{align*}
\Lambda^1_t (y) &\le - \gamma |\Lambda^2_t (y)| \quad \forall \, y \in \{y' \in \RR^d \, | \, |y'|=1\}, \text{ where} \\
\Lambda^1_t (y)&:=\sum_{i=1}^d y_i \Big(\!\trace\big(\partial_z f^i (\partial_z \mu)^{\mathrm{T}} \big) - y^{\mathrm{T}}\partial_z \mu(\partial_z f^i)^{\mathrm{T}} y + y^{\mathrm{T}}\partial_y \sigma(\partial_z f^i)^{\mathrm{T}} y \Big) \\
&\phantom{:=}+ \partial_x \sigma(\partial_z \mu)^{\mathrm{T}} y + (\partial_y \mu)^{\mathrm{T}} y, \\
\Lambda^2_t (y)&:= |\partial_z \mu|^2 - |(\partial_z
\mu)^{\mathrm{T}} y|^2 + 2 y^{\mathrm{T}} \partial_z \mu (\partial_y
\sigma)^{\mathrm{T}} y,
\end{align*}
and where we assumed that all corresponding derivatives exist. }
\end{itemize}
}

The purpose of Assumption \textup{(A2.1)} is to make a distinction
between the Lipschitz constant of $\phi$ and those of the other
coefficients, since such a distinction is needed hereinafter.
Observe moreover that we require stronger regularity conditions than
those of Delarue \cite{Del,Del2}, since the coefficients of the
system \eqref{fbfunctdiffeq} are random (however, $\sigma$ is
allowed to be degenerate). Under these conditions, by deriving some
clever estimates for linear FBSDEs, Zhang obtained the following
result:
\begin{lem}[Zhang \cite{Zha2}]\label{zhang_lipschitz}
Assume that the coefficients $\mu$, $\sigma$, $f$ and $\phi$ satisfy
Assumption \textup{{\bfseries (A2)}} with $\gamma=\frac{1}{C}$, and
let $\ell$ denote the constant in \autoref{exun_fbsde}. Let $T <
\ell$, and for $x^i \in \RR$, $i=1,2$, let $(X^i, V^i)$ denote the
solution of
\[
\begin{cases}
d X^i_t= \mu(t,X^i_t, \YYY(X^i,V^i)_t, \ZZZ(X^i,V^i)_t) d t + \sigma(t,X^i_t,\YYY(X^i,V^i)_t) d W_t,  \\
d V^i_t=f(t,X^i_t, \YYY(X^i,V^i)_t, \ZZZ(X^i,V^i)_t) d t, \\
X^i_0=x^i, \quad V^i_0=0.
\end{cases}
\]
Then, there is a constant $\varrho_C$, depending only on $C$, such
that
\[
\big|\YYY(X^1,V^1)_0 - \YYY(X^2,V^2)_0\big| \le \overline{C} |x^1-
x^2|,
\]
where $\overline{C}:=\sqrt{\big(|C'|^2 + 1\big) e^{\varrho_C T} -1}
> 0$.
\end{lem}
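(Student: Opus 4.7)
The plan is to analyse pointwise differences of the two solutions via a linear variational FBSDE. Writing $\Delta X := X^1 - X^2$, $\Delta Y := \YYY(X^1,V^1) - \YYY(X^2,V^2)$, and $\Delta Z := \ZZZ(X^1,V^1) - \ZZZ(X^2,V^2)$, the mean value theorem applied under (A1.2'), (A1.4) and (A2.1) expresses every difference of coefficients as a bounded linear combination of $(\Delta X, \Delta Y, \Delta Z)$; for instance
\[
\Delta f_t = \alpha^f_t\,\Delta X_t + \beta^f_t\,\Delta Y_t + \gamma^f_t\,\Delta Z_t,
\]
with $|\alpha^f|,|\beta^f|,|\gamma^f|\le C$, and analogously for $\Delta\mu,\Delta\sigma$, while $|\Delta\phi(X_T)|\le C'|\Delta X_T|$ by (A2.1). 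Thus $(\Delta X,\Delta Y,\Delta Z)$ satisfies a linear fully coupled Brownian FBSDE whose drift and diffusion coefficients are bounded by $C$ and whose terminal gradient is bounded by $C'$.

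Second, I would apply It\^o's formula to $|\Delta Y_t|^2$ on $[0,T]$. Since the filtration is Brownian and $\LLL^1=\LLL^2=\LLL^3$ is the It\^o representation operator, $\ZZZ$ is exactly the martingale integrand of $\MMM$ with respect to $W$, so taking expectations yields the identity
\[
|\Delta Y_0|^2 + E\!\int_0^T |\Delta Z_s|^2\, ds = E\big|\Delta\phi(X_T)\big|^2 + 2\,E\!\int_0^T \Delta Y_s^{\mathrm T}\,\Delta f_s\, ds,
\]
where the terminal term is controlled by $|C'|^2 E|\Delta X_T|^2$. The problematic contribution is $2\Delta Y^{\mathrm T}\gamma^f\Delta Z$: a naive Young's inequality produces $|\Delta Z|^2$ with coefficient at least one, preventing absorption into the left-hand side. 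This is exactly where Assumption (A2.2) with $\gamma=1/C$ intervenes, playing the role of a compatibility/non-degeneracy condition between the forward and backward linearisations. Carefully combining the It\^o expansion of $|\Delta Y|^2$ with that of $|\Delta X|^2$ and with (A2.2), I expect to obtain a pointwise inequality of the form
\[
2\big|\Delta Y^{\mathrm T}\gamma^f\Delta Z\big| \le |\Delta Z|^2 + \rho_C\big(|\Delta X|^2 + |\Delta Y|^2\big),
\]
with a constant $\rho_C$ depending only on $C$, so that the offending $|\Delta Z|^2$ contribution on the right is absorbed into the one on the left of the identity above.

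Third, once the $|\Delta Z|^2$ contribution has been controlled, the forward It\^o expansion of $|\Delta X_t|^2$ gives
\[
E|\Delta X_t|^2 \le |x^1 - x^2|^2 + \rho_C\!\int_0^t E|\Delta X_s|^2\, ds,
\]
so Gronwall's lemma produces $E|\Delta X_T|^2 \le e^{\rho_C T}|x^1 - x^2|^2$. Plugging this into the previous identity, together with a second Gronwall step controlling the $E|\Delta Y|^2$ integral, yields
\[
|\Delta Y_0|^2 \le |C'|^2 E|\Delta X_T|^2 + (e^{\rho_C T}-1)|x^1 - x^2|^2 = \big[(|C'|^2+1)e^{\rho_C T} - 1\big]\,|x^1 - x^2|^2,
\]
which, after taking square roots, is exactly the claimed estimate with $\overline{C}=\sqrt{(|C'|^2+1)e^{\rho_C T}-1}$. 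The main obstacle is the absorption step: the specific algebraic form of $\Lambda^1,\Lambda^2$ in (A2.2) is tailored precisely so that the coefficient of $|\Delta Z|^2$ in the cross term can be pushed strictly below one, which is really the crux of Zhang's \cite{Zha2} argument. Everything else, though tedious, reduces to careful bookkeeping of standard Gronwall inequalities.
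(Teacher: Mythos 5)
First, note that the paper does not actually prove this lemma: it is quoted from Zhang \cite{Zha2}, and the text says only that the proof can be found there, so there is no in-paper argument to compare against line by line. Judged on its own merits, your sketch has the right opening move (linearisation of the coefficient differences) and even lands on the right final arithmetic for $\overline{C}$, but the two estimates you feed into that arithmetic are not justified, and the gap sits exactly at the point you yourself flag as ``the crux''. Your forward Gronwall step reads $E|\Delta X_t|^2 \le |x^1-x^2|^2 + \rho_C\int_0^t E|\Delta X_s|^2\,ds$, but since $\mu$ and $\sigma$ depend on $Y$ and $\mu$ on $Z$, the It\^o expansion of $|\Delta X|^2$ necessarily carries $E\int|\Delta Y_s|^2\,ds$ and $E\int|\Delta Z_s|^2\,ds$ on the right; you cannot bound $E|\Delta X_T|^2$ by $e^{\rho_C T}|x^1-x^2|^2$ before the backward estimate is closed, while the backward estimate in turn needs $E|\Delta X_T|^2$ through the terminal condition. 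This circularity is the entire difficulty of the fully coupled case, and a scheme of two separate Gronwall inequalities does not break it. Moreover, the absorption inequality $2|\Delta Y^{\mathrm T}\gamma^f\Delta Z| \le |\Delta Z|^2 + \rho_C(|\Delta X|^2+|\Delta Y|^2)$ is just Young's inequality with $|\gamma^f|\le C$ and makes no use of \textup{(A2.2)} at all: the quantities $\Lambda^1,\Lambda^2$ are built from $\partial_z\mu$, $\partial_y\mu$, $\partial_x\sigma$, $\partial_y\sigma$ and their interaction with $\partial_z f$, i.e.\ precisely the coupling coefficients that your forward Gronwall step discarded, not from $\partial_z f$ alone.

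A telling symptom is that your argument never uses $n=1$, whereas the paper restricts to a one-dimensional forward component before stating the lemma; if the two-Gronwall scheme closed, the estimate would hold for a forward component of arbitrary dimension, which is known to fail for general fully coupled FBSDEs. Zhang's actual mechanism is a backward differential inequality for the ratio $|\Delta Y_t|^2/|\Delta X_t|^2$ (meaningful because $\Delta X$ solves a linear scalar SDE and is therefore a nonvanishing stochastic exponential times $x^1-x^2$): the quantity $1+|\Delta Y_t|^2/|\Delta X_t|^2$ obeys a Riccati-type inequality whose quadratic contribution in $\Delta Z/\Delta X$ is controlled exactly by the sign condition $\Lambda^1\le-\gamma|\Lambda^2|$ with $\gamma=1/C$, yielding $1+|\Delta Y_0|^2/|x^1-x^2|^2 \le (1+|C'|^2)e^{\varrho_C T}$, which is precisely the stated bound since $\overline{C}^2+1=(|C'|^2+1)e^{\varrho_C T}$. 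To repair your proof you would have to replace the two decoupled Gronwall steps by this single coupled one (or an equivalent decoupling-field argument); that is where all the real work lies.
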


The proof can be found in \cite{Zha2}. As already mentioned,
\autoref{zhang_lipschitz} in conjunction with \autoref{exun_fbsde}
allows to construct a solution to the system \eqref{fbfunctdiffeq}
on arbitrarily large time intervals. More exactly:
\begin{theorem}\label{global_fbsdyn}
Assume that the coefficients $\mu$, $\sigma$, $f$ and $\phi$ satisfy
Assumption \textup{{\bfseries (A2)}}. Then, for any $T > 0$, the
fully coupled system \eqref{fbfunctdiffeq} has a unique solution
$(X,V)$ in $\SSSS^2([0,T],\RR^n) \times \SSSS^2([0,T],\RR^d)$.
\end{theorem}

\begin{proof}
We assume without loss of generality that $\gamma=\frac{1}{C}$ in
Assumption {\bfseries (A2)}, by changing $C$ or $\gamma$ if
necessary.  Let $\overline{C}$ denote the constant in
\autoref{zhang_lipschitz}, and let $\ell=\ell(\overline{C})$ be the
constant in \autoref{exun_fbsde}. We consider a partition $(T_0,
\cdots, T_N)$ of $[0,T]$ such that $0=T_0 < \cdots < T_N=T$ and  $0
< T_i - T_{i-1} < \ell$ for $i=1,\cdots,N$, and set $I_i:=[T_{i-1},
T_i]$. Moreover, to emphasize the dependence of the operators
$\YYY$, $\MMM$ on the terminal time and condition, we introduce a
slightly different notation and write, for all $t \in [0,T_i]$,
\begin{align*}
\breve{\MMM}^{T_i}(\xi,V)_t&=E[\xi + V_{T_i}|\FFF_t], \\
\breve{\YYY}^{T_i}(\xi,V)_t&=E[\xi + V_{T_i}|\FFF_t] - V_t.
\end{align*}
$\breve{\ZZZ}^{T_i}(\xi,V)_t$ is then defined via the It\^o
representation of $(\breve{\MMM}^{T_i}(\xi,V)_t)_{t \in [0,T_i]}$.

The first step consists in constructing appropriate terminal
conditions for all subintervals $I_i$. This is accomplished via a
backward procedure. We set $\theta_N:=\phi$, $C_N:=C'$, and for all
$x \in \RR$, we consider the following system on $I_N$: \small
\[
\begin{cases}
d X^{N,x}_t= \mu(t,X^{N,x}_t, \breve{\YYY}^{T_N}(\theta_N(X^{N,x}_{T_N}),V^{N,x})_t, \breve{\ZZZ}^{T_N}(\theta_N(X^{N,x}_{T_N}),V^{N,x})_t) d t \\
\phantom{d X^{N,x}_t=} + \sigma(t,X^{N,x}_t,\breve{\YYY}^{T_N}(\theta_N(X^{N,x}_{T_N}),V^{N,x})_t) d W_t,  \\
d V^{N,x}_t=f(t,X^{N,x}_t, \breve{\YYY}^{T_N}(\theta_N(X^{N,x}_{T_N}),V^{N,x})_t, \breve{\ZZZ}^{T_N}(\theta_N(X^{N,x}_{T_N}),V^{N,x})_t) d t, \\
X^{N,x}_{T_{N-1}}=x, \quad V^{N,x}_{T_{N-1}}=0.
\end{cases}
\]
\normalsize Since $\theta_N$ has Lipschitz constant $C_N \le
\overline{C}$, the system has for all $x \in \RR$ a unique solution
$(X^{N,x},V^{N,x})$ by \autoref{exun_fbsde} and
\autoref{rem_initialtime} \textup{(}\textit{i}\textup{)}. We can
thus define $\theta_{N-1}$ by
\[
\theta_{N-1}(x):=\breve{\YYY}^{T_N}(\theta_N(X^{N,x}_{T_N}),
V^{N,x})_{T_{N-1}}.
\]
It is then easy to check that $\theta_{N-1}(x)$ is
$\FFF_{T_{N-1}}$-measurable for all $x \in \RR$. Moreover, by
\autoref{zhang_lipschitz}, $\theta_{N-1}$ is uniformly Lipschitz in
$x$ with constant
\[
C_{N-1}:=\sqrt{\big(|C_N|^2 + 1\big) e^{\varrho_C (T_N - T_{N-1})}
-1}.
\]

Since $C_{N-1} \le \overline{C}$, we can iterate the same argument:
for $i=N-1,\cdots, 2$, we consider for all $x \in \RR$ the solution
$(X^{i,x},V^{i,x})$ on $I_i$ of the system: \small
\[
\begin{cases}
d X^{i,x}_t= \mu(t,X^{i,x}_t, \breve{\YYY}^{T_i}(\theta_i(X^{i,x}_{T_i}),V^{i,x})_t, \breve{\ZZZ}^{T_i}(\theta_i(X^{i,x}_{T_i}),V^{i,x})_t) d t \\
\phantom{d X^{i,x}_t=} + \sigma(t,X^{i,x}_t,\breve{\YYY}^{T_i}(\theta_i(X^{i,x}_{T_i}),V^{i,x})_t) d W_t,  \\
d V^{i,x}_t=f(t,X^{i,x}_t, \breve{\YYY}^{T_i}(\theta_i(X^{i,x}_{T_i}),V^{i,x})_t, \breve{\ZZZ}^{T_i}(\theta_i(X^{i,x}_{T_i}),V^{i,x})_t) d t, \\
X^{i,x}_{T_{i-1}}=x, \quad V^{i,x}_{T_{i-1}}=0.
\end{cases}
\]
\normalsize which exists by \autoref{exun_fbsde} and
\autoref{rem_initialtime} \textup{(}\textit{i}\textup{)}. We then
define $\theta_{i-1}$ by
\[
\theta_{i-1}(x):=\breve{\YYY}^{T_i}(\theta_i(X^{i,x}_{T_i}),
V^{i,x})_{T_{i-1}}.
\]
$\theta_{i-1}(x)$ is thus $\FFF_{T_{i-1}}$-measurable for all $x \in
\RR$, and by \autoref{zhang_lipschitz}, $\theta_{i-1}$ is uniformly
Lipschitz in $x$ with constant $C_{i-1}:=\sqrt{\big(|C_i|^2 + 1\big)
e^{\varrho_C (T_i - T_{i-1})} -1}$. Moreover, we can easily verify
by induction that
\[
C_{i-1}= \sqrt{\big(|C_N|^2 + 1\big) e^{\varrho_C (T_N - T_{i-1})}
-1} \le \overline{C}.
\]

Now that we have derived appropriate terminal conditions $\theta_i$,
we can construct the solution on the whole interval $[0,T]$ by a
forward procedure. We set $X^0_{T_0}:=x$, $V^0_{T_0}:=0$. Then, for
$i=1, \cdots, N$, we denote by $(X^i,V^i)$ the solution on $I_i$ of
the system \small
\[
\begin{cases}
d X^i_t= \mu(t,X^i_t, \breve{\YYY}^{T_i}(\theta_i(X^i_{T_i}),V^i)_t, \breve{\ZZZ}^{T_i}(\theta_i(X^i_{T_i}),V^i)_t) d t \\
\phantom{d X^i_t=} + \sigma(t,X^i_t,\breve{\YYY}^{T_i}(\theta_i(X^i_{T_i}),V^i)_t) d W_t,  \\
d V^i_t=f(t,X^i_t, \breve{\YYY}^{T_i}(\theta_i(X^i_{T_i}),V^i)_t, \breve{\ZZZ}^{T_i}(\theta_i(X^i_{T_i}),V^i)_t) d t, \\
X^i_{T_{i-1}}=X^{i-1}_{T_{i-1}}, \quad
V^i_{T_{i-1}}=V^{i-1}_{T_{i-1}}.
\end{cases}
\]
\normalsize which exists due to \autoref{exun_fbsde} and
\autoref{rem_initialtime} \textup{(}\textit{i}\textup{)}. We then
set, for $t \in I_i$,
\[
X_t:=X^i_t, \quad V_t:=V^i_t.
\]
To prove that $(X,V)$ solves \eqref{fbfunctdiffeq} on $[0,T]$, it
suffices to check that, for $t \in I_i$,
\[
\breve{\YYY}^{T_i}(\theta_i(X^i_{T_i}),V^i)_t=\YYY(\phi(X_{T_N}),V)_t,
\quad
\breve{\ZZZ}^{T_i}(\theta_i(X^i_{T_i}),V^i)_t=\ZZZ(\phi(X_{T_N}),V)_t.
\]
However, for $i=1,\cdots, N-1$ and $t\in [0,T_i]$, we have that
\begin{align*}
\breve{\YYY}^{T_i}(\theta_i(X_{T_i}),V)_t
&=\breve{\YYY}^{T_i}(\theta_i(X^i_{T_i}),V^i)_t\\
&=E\big[\YYY^{T_{i+1}}\big( \theta_{i+1}(X^{i+1}_{T_{i+1}}), V^{i+1}\big)_{T_i} + V^i_{T_i} \big|\FFF_t\big] - V^i_t \\
&= E\big[\theta_{i+1}(X_{T_{i+1}})+ V_{T_{i+1}} - V_{T_i} + V_{T_i} \big|\FFF_t\big] - V_t \\
&= \breve{\YYY}^{T_{i+1}}(\theta_{i+1}(X_{T_{i+1}}),V)_t
\end{align*}
by the construction of $\theta_i$. This gives by induction that
$\breve{\YYY}^{T_i}(\theta_i(X^i_{T_i}),V^i)_t=\YYY(\theta_N(X_{T_N}),V)_t=\YYY(\phi(X_{T_N}),V)_t$
on $I_i$ for $i=1,\cdots,N$. On the other hand, this implies that
$\breve{\MMM}^{T_i}(\theta_i(X^i_{T_i}),V^i)_t=\MMM(\phi(X_{T_N}),V)_t$
on $I_i$ for all $i$. In other words,
$(\breve{\MMM}^{T_i}(\theta_i(X^i_{T_i}),V^i)_t)_{t \in I_i}$ is the
restriction to $I_i$ of the martingale $(\MMM(\phi(X_{T_N}),V)_t)_{t
\in [0,T]}$ and, due to the locality of the operator $\ZZZ$, this
gives us that
$\breve{\ZZZ}^{T_i}(\theta_i(X^i_{T_i}),V^i)_t=\ZZZ(\phi(X_{T_N}),V)_t$
on $I_i$.

This shows that $(X,V)$ is a solution of \eqref{fbfunctdiffeq} on
$[0,T]$, and the proof is concluded by observing that the uniqueness
is a consequence of the uniqueness of $(X^i,V^i)$ on $I_i$.
\end{proof}

We conclude this article by briefly mentioning an extension of
Zhang's results \cite{Zha1,Zha2} recently derived by Ma et al.
\cite{MaWuZhaZha}. Motivated by the connection between FBSDEs and
PDEs in the deterministic case, the authors suggest that, for random
coefficients, the solution can be extended to arbitrary intervals by
relying on the existence of a random field $\theta$ (called
decoupling field) such that $\YYY(X,V)_t=\theta(t, X_t)$. It is then
sufficient to show that $\theta$ is uniformly Lipschitz continuous.
This can be obtained via the introduction of a backward stochastic
Riccati equation of quadratic growth, under much weaker assumptions
than Assumption {\bfseries (A2)} (however, all processes have to be
one-dimensional).



\begin{thebibliography}{10}

\bibitem{Ant}
F.~Antonelli, Backward-forward stochastic differential equations,
\newblock \emph{Ann. Appl. Probab.}, \textbf{3}(3) (1993) 777--793.

\bibitem{Bis}
J.-M. Bismut, Conjugate convex functions in optimal stochastic
control,
\newblock \emph{J. Math. Anal. Appl.}, \textbf{44} (1973) 384--404.

\bibitem{CasDel2}
M.~Casserini and F.~Delbaen, {A note on quadratic BSDEs and
associated functional differential equations},
\newblock \emph{{ETH} Z\"urich}, {Forthcoming}.

\bibitem{Del}
F.~Delarue, On the existence and uniqueness of solutions to {FBSDE}s
in a
  non-degenerate case,
\newblock \emph{Stochastic Process. Appl.}, \textbf{99}(2) (2002) 209--286.

\bibitem{Del2}
F.~Delarue, Estimates of the solutions of a system of quasi-linear
{PDE}s: {A}
  probabilistic scheme,
\newblock In \emph{S\'eminaire de {P}robabilit\'es {XXXVII}}, volume 1832 of
  \emph{Lecture Notes in Math.}, Springer, Berlin, (2003) 290--332.

\bibitem{Delong}
{\L}.~{Delong}, {Applications of time-delayed backward stochastic
differential
  e\-quations to pricing, hedging and portfolio management},
\newblock
\emph{\href{http://arxiv.org/abs/1005.4417}{arXiv:1005.4417v3}},
  (2011).

\bibitem{DelImk}
{\L}.~Delong and P.~Imkeller, Backward stochastic differential
equations with
  time delayed generators---results and counterexamples,
\newblock \emph{Ann. Appl. Probab.}, \textbf{20}(4) (2010) 1512--1536.

\bibitem{DosRevZha}
G.~{Dos Reis}, A.~{R{\'e}veillac}, and J.~{Zhang}, {FBSDE with time
delayed
  generators: $L^p$-solutions, differentiability, representation formulas and
  path regularity},
\newblock \emph{Stochastic Process. Appl.}, \textbf{121}(9) (2011) 2114--2150.

\bibitem{ElKHamMat}
N.~El~Karoui, S.~Hamad{\`e}ne, and A.~Matoussi, \emph{Indifference
pricing:
  theory and applications}, chapter BSDEs and applications, Princeton University Press, (2009) 267--320.

\bibitem{ElKMaz}
N.~El~Karoui and L.~Mazliak, editors: \emph{Backward stochastic differential
  equations}, volume 364 of \emph{Pitman Research Notes in Mathematics
  Series},
\newblock (Longman, Harlow, 1997).

\bibitem{ElKPenQue}
N.~El~Karoui, S.~Peng, and M.~C. Quenez, Backward stochastic
differential
  equations in finance,
\newblock \emph{Mathematical Finance}, \textbf{7}(1) (1997) 1--71.

\bibitem{HorHuImkRevZha}
U.~Horst, Y.~Hu, P.~Imkeller, A.~R{\'e}veillac, and J.~Zhang,
Forward-backward
  systems for expected utility maximization,
\newblock \emph{Stochastic Processes and their Applications},
Forthcoming.

\bibitem{HuPen}
Y.~Hu and S.~Peng, Solution of forward-backward stochastic
differential
  equations,
\newblock \emph{Probab. Theory Related Fields}, \textbf{103}(2) (1995) 273--283.

\bibitem{LiaLyoQia2}
G.~{Liang}, T.~{Lyons}, and Z.~{Qian}: {A functional approach to FBSDEs and its
  application in optimal portfolios},
\newblock \emph{\href{http://arxiv.org/abs/1011.4499}{arXiv:1011.4499v1}}, (2010).

\bibitem{LiaLyoQia}
G.~Liang, T.~Lyons, and Z.~Qian, Backward stochastic dynamics on a
filtered
  probability space,
  \newblock \emph{The Annals of Probability}, \textbf{39}(4) (2011) 1422--1448.

\bibitem{MaProYon}
J.~Ma, P.~Protter, and J.~M. Yong, Solving forward-backward
stochastic
  differential equations explicitly---a four step scheme,
\newblock \emph{Probab. Theory Related Fields}, \textbf{98}(3) (1994) 339--359.

\bibitem{MaWuZhaZha}
J.~{Ma}, Z.~{Wu}, D.~{Zhang}, and J.~{Zhang}, {On wellposedness of
  forward-backward SDEs---a unified approach},
\newblock \emph{\href{http://arxiv.org/abs/1110.4658}{arXiv:1110.4658v1}}, (2011).

\bibitem{MaYon}
J.~Ma and J.~Yong, \emph{Forward-backward stochastic differential
equations and
  their applications}, volume 1702 of \emph{Lecture Notes in
  Mathematics},
\newblock (Springer-Verlag, Berlin, 1999).

\bibitem{ParPen}
{\'E}.~Pardoux and S.~G. Peng, Adapted solution of a backward
stochastic
  differential equation,
\newblock \emph{Systems Control Lett.}, \textbf{14}(1) (1990) 55--61.

\bibitem{ParPen2}
{\'E}.~Pardoux and S.~G. Peng, Backward stochastic differential
equations and
  quasilinear parabolic partial differential equations,
\newblock In \emph{Stochastic partial differential equations and their
  applications ({C}harlotte, {NC}, 1991)}, volume 176 of \emph{Lecture Notes in
  Control and Inform. Sci.}, Springer, Berlin, (1992) 200--217.

\bibitem{ParTan}
{\'E}.~Pardoux and S.~Tang, Forward-backward stochastic differential
equations
  and quasilinear parabolic {PDE}s,
\newblock \emph{Probab. Theory Related Fields}, \textbf{114}(2) (1999) 123--150.

\bibitem{PenWu}
S.~Peng and Z.~Wu, Fully coupled forward-backward stochastic
differential
  equations and applications to optimal control,
\newblock \emph{SIAM J. Control Optim.}, \textbf{37}(3) (1999) 825--843.

\bibitem{Roz}
B.~L. Rozovsky, A note on strong solutions of stochastic
differential equations
  with random coefficients,
\newblock In \emph{Stochastic differential systems ({P}roc. {IFIP}-{WG} 7/1
  {W}orking {C}onf., {V}ilnius, 1978)}, volume~25 of \emph{Lecture Notes in
  Control and Information Sci.}, Springer, Berlin, (1980) 287--296.

\bibitem{Yon}
J.~Yong, Finding adapted solutions of forward-backward stochastic
differential
  equations: method of continuation,
\newblock \emph{Probab. Theory Related Fields}, \textbf{107}(4) (1997) 537--572.

\bibitem{Zha1}
J.~Zhang, The wellposedness of {FBSDE}s,
\newblock \emph{Discrete Contin. Dyn. Syst. Ser. B}, \textbf{6}(4) (2006) 927--940.

\bibitem{Zha2}
J.~Zhang, The wellposedness of {FBSDE}s ({II}),
\newblock \emph{Preprint. \emph{Available at \linebreak
\url{http://www-bcf.usc.edu/~jianfenz/Papers/fbsde2.pdf}}}, (2006).

\end{thebibliography}
\end{document}